\newtheorem{thm}{Theorem}[section]
\newtheorem{lem}[thm]{Lemma}
\theoremstyle{definition}
\theoremstyle{remark}
\numberwithin{equation}{section}
\def\R{{\mathbb R}}
\def\<{\langle}
\def\>{\rangle}
\def\R{{\mathbb R}}
\def\H{{\mathbb H}}
\def\V{{\mathbb V}}
\def\X{{\mathbb X}}
\def\C{{\mathbb C}}
\def\L{{\mathbb L}}
\def\cgw{\rightharpoonup}
\def\d{{\rm d}}
\def\uu{{\mathbf u}}
\def\vv{{\mathbf v}}
\def\ff{{\mathbf f}}
\def\ww{{\mathbf w}}
\def\vp{{\varepsilon}}
\def\hh{{\mathbf h}}
\def\per{{\rm per}}
\def\be#1{\begin{equation}\label{#1}}
\def\ee{\end{equation}}
\def\email#1{\ead{#1}}
\def\be#1{\begin{equation}\label{#1}}
\def\ee{\end{equation}}
\def\og{\leavevmode\raise.3ex\hbox{$\scriptscriptstyle\langle\!\langle$~}}
\def\fg{\leavevmode\raise.3ex\hbox{~$\!\scriptscriptstyle\,\rangle\!\rangle$}}
\begin{document}

%
%
%
%
%
%
%
%
%

\title{Limits of the Stokes and Navier--Stokes equations in a punctured periodic domain}

\author[Z]{Michel Chipot}
\address[Z]{Institut f\"{u}r Mathematik, Angewandte Mathematik, Winterthurerstrasse 190, CH-8057 Z\"{u}rich.
Switzerland}
\email{m.m.chipot@math.uzh.ch}


\author[JD]{J\'er\^ome Droniou}
\address[JD]{School of Mathematical Sciences\\
Monash University\\
Clayton, Victoria 3800\\
Australia.}
\email{jerome.droniou@monash.edu}

\author[GP]{Gabriela Planas}
\address[GP]{Departamento de Matem\'atica,
Instituto de Matem\'atica, Estat\'\i stica e Computa\c{c}\~ao Cient\'\i fica,
Universidade Estadual de Campinas,
Rua Sergio Buarque de Holanda 651,
13083-859 Campinas, SP.
Brasil}
\email{gplanas@ime.unicamp.br}

\author[W]{James C. Robinson}
\address[W]{%
Mathematics Institute,
University of Warwick,
Coventry, CV4 7AL.
U.K.}
\email{j.c.robinson@warwick.ac.uk}

\author[Z]{Wei Xue}
\email{wei.xue@math.uzh.ch}




\begin{abstract}
We treat three problems on a two-dimensional `punctured periodic domain': we take $\Omega_r=(-L,L)^2\setminus D_r$, where $D_r=B(0,r)$ is the disc of radius $r$ centred at the origin. We impose periodic boundary conditions on the boundary of $\Omega=(-L,L)^2$, and Dirichlet boundary conditions on the circumference of the disc. In this setting we consider the Poisson equation, the Stokes equations, and the time-dependent Navier--Stokes equations, all with a fixed forcing function $f$,and examine the behaviour of solutions as $r\to0$. In all three cases we show convergence of the solutions to those of the limiting problem, i.e.\ the problem posed on all of $\Omega$ with periodic boundary conditions.
\end{abstract}

\maketitle

\section{Introduction}

 The study of fluid  flow around an obstacle is a challenging and interesting problem in fluid mechanics, and has been the subject of much experimental and numerical investigation (see, among others,  \cite{Ariel,Elde,Homen,Hudson,Pierce,Smith,Turfus,Zannetti}).

The mathematical analysis of the influence of an obstacle on the behaviour of the flow when the size of the obstacle is small when compared to that of the reference spatial scale
has recently received increased attention. The case of a single obstacle in a two-dimensional ideal flow was analysed by Iftimie, Lopes Filho, \& Nussenzveig Lopes \cite{ILL2003}; then  Iftimie et al. \cite{ILL2006} and Iftimie \& Kelliher \cite{IK2009}  considered the viscous case, Lopes Filho \cite{Lopes} treated bounded domains with several holes, Lacave \cite{Lacave09,Lacave2,Lacave3} considered
obstacles that shrink to a curve. For problems in exterior domains (i.e.\ extending to infinity) the flow is usually assumed to vanish at infinity, although the case of flows constant at infinity has been considered by Lopes Filho, Nguyen, \& Nussenzveig Lopes \cite{Lopes13}. A related `small body' problem was considered by Robinson \cite{Robinson}, who treated a simplified model of combustion in which physical particles were replaced by diffuse but compact regions of influence in the flow.
Very recently, Lu \cite{Lu16} treated the Dirichlet problem in the three-dimensional unit ball with a shrinking hole. Uniform estimates, as the size of the hole goes to zero,  in $W^{1,p}$  for  any $ 3/2 < p < 3$  and  counterexamples that the uniform $W^{1,p}$-estimates do not hold when $ 1 < p < 3/2 $ or $ 3 < p < + \infty $ are provided. These estimates were extended by the same author \cite{Lupreprint} to the Stokes problem in a $n$-dimensional bounded domain, showing  uniform estimates  for  any $ n' < p < n$ and  counterexamples for  $ 1 < p < n' $ or $ n < p < + \infty $. Notice that last two papers do not consider the two-dimensional case for $ p=2 $.

Here we are interested in the vanishing obstacle problem in a two-dimensional periodic domain with a particularly simple geometry. More precisely, we are concerned with periodic flows on the punctured domain \[\Omega_r=(-L,L)^2\setminus D_r, \ L > 0 ,\]
where   $D_r=B(0,r)$ is the disc of radius $r$ centred at the origin, and we study the behaviour of the solutions of various models when the radius $ r $ of the disc tends to zero. Throughout the paper we refer to the excised disc $D_r$ as the `obstacle' in keeping with the ultimate application to problems of fluid flow.

Our primary motivation for this geometry  was the moving `tracer particle' problem considered in two dimensions by Dashti \& Robinson \cite{DR} and in three dimensions by Silvestre \& Takahashi \cite{Silvestre}: given a solid disc/sphere of radius $r$ moving in the fluid, does the motion of the particle follow that of the fluid in the limit $r\to0$? Our aim was to include rotation of the tracer in the 2D case, which was excluded in \cite{DR}. However, in the course of the analysis that follows we observed the failure of certain uniform elliptic regularity estimates that are required in both these papers (see Section 2.1); while the two-dimensional case has now been resolved by Lacave \& Takahashi \cite{Lacave17} for small initial data (using maximal regularity estimates for the Stokes equation) the three-dimensional case remains open.
(We choose a particularly simple geometry and a somewhat simpler problem in which these uniform estimates fail, but there is no reason to believe that this has any significant effect of the nature of this phenomenon.)

In order to clarify the setting and provide some background to these uniform elliptic estimates, as well as allowing us to outline the main ideas that will then be applied in the more complicated Stokes and time-dependent Navier--Stokes problems (which have the added component of incompressibility) we first consider the Poisson equation as a model problem. Thus our initial aim (in Section 2) will be to determine the asymptotic behaviour of the solution of the following problem when $ r \rightarrow 0$:
\begin{equation} \label{laplace}
-\Delta u_r = f \ \;\text{in } \Omega_r,\qquad
u_r \text{ periodic},\qquad
u_r = 0 \text{ on } \partial D_r.
\end{equation}
While this problem has a solution for any $f\in L^2(\Omega_r)$, the limiting problem,
$$
-\Delta u=f\ \;\text{in }\Omega,\qquad u \text{ periodic},
$$
only has a solution when
\begin{equation}\label{intf0}
  \int_\Omega f=0.
\end{equation}
We will show that when (\ref{intf0}) holds then the solutions of (\ref{laplace}) are uniformly bounded in $r$ in the sense that
$$
\int_{\Omega_r}|\nabla u_r|^2+\int_{\Omega_r}\left|u_r-\fint_\Omega u_r\right|^2
$$
is uniformly bounded, where $\fint_\Omega u=|\Omega|^{-1}\int_\Omega u$ denotes the average of $u$ over $\Omega$ (note that this is the whole domain and not just $\Omega_r$). This is enough to show that
$$
u_r-\fint_\Omega u_r\to u
$$
in $H^1(\Omega)$ and that $u$ satisfies the limiting equation. If (\ref{intf0}) does not hold then the limiting problem has no solution, and in this case it follows that $\|u_r\|_{H^1}$ is unbounded as $r\to0$.

We remark here, and will return to this later, that we have been unable to obtain a uniform bound on $\fint_\Omega u_r$, since the constant in the Poincar\'e inequality available on $\Omega_r$ degrades as $r\to0$ (see Lemma \ref{badpoinc}).

In Section 3 we obtain similar results for the Stokes problem
\begin{flalign*}
\begin{cases}
-\Delta \uu_r + \nabla p_r=\ff\ \;\text{in } \Omega_r,\\
\text{div } \uu_r = 0 , \\
\uu_r \text{ periodic}, \\
\uu_r = 0 \text{ on } \partial D_r.
\end{cases}
\end{flalign*}
The main change from the case of the pure Laplacian is that we now have to deal with divergence-free vector-valued functions. The key technical result that allows us to do this is a method for approximating divergence-free periodic functions defined on the whole of $\Omega$ by a sequence of divergence-free functions that satisfy the zero boundary condition on $D_r$ (Lemma \ref{approx2}). Once again, we require that $\int_\Omega\ff=0$. As before, we can find uniform estimates sufficient to show that $\uu_r-\fint_\Omega\uu_r$ converges to a solution of the limiting problem, but we are unable to bound the average of $\uu_r$ over $\Omega$.

It would seem that the next natural step would be to consider the stationary Navier--Stokes equations in $\Omega_r$,
\begin{equation}\label{statNSE}
-\Delta \uu_r+(\uu_r\cdot\nabla)\uu_r+\nabla p_r=\ff,\qquad\nabla\cdot\uu_r=0.
\end{equation}
However, while in the linear problems considered so far bounds on $\uu_r-\fint_\Omega\uu_r$ were sufficient to pass to the limit, this is not the case here. Informally, if we set $\<\uu_r\>=\fint_\Omega\uu_r$ and consider the equation for $\tilde\uu_r=\uu_r-\<\uu_r\>$ then we obtain
$$
-\Delta\tilde\uu_r+(\tilde\uu_r\cdot\nabla)\tilde\uu_r +(\<\uu_r\>\cdot\nabla)\tilde\uu_r+\nabla p_r=\ff,
$$
which contains the additional term $(\<\uu_r\>\cdot\nabla)\tilde\uu_r$. A uniform bound on $\<\uu_r\>$ would enable us to pass to the limit in this term, but we do not currently have such a bound.

An additional factor that makes this problem different in character from the others we consider here is that there is no known general uniqueness result for solutions of (\ref{statNSE}), even on the entire periodic domain. As such, it is perhaps more natural to consider a perturbation problem (given a solution of the equation on $\Omega$, investigate the existence of nearby solutions for $r$ small) than as a limiting problem; or to treat a restricted setting in which uniqueness results are available (when $\ff$ is small in an appropriate sense). For more discussion of this stationary problem we refer to the classical work of Ladyzhenskaya \cite{Ladyz} and Temam \cite{Temam1,Temam2}.

%

We therefore instead turn in Section 4 to the time-dependent Navier--Stokes problem, which turns
out to be more straightforward and for which we do not require the use of the Poincar\'e inequality, since a bound on the $\L^2$ norm follows immediately from the energy inequality. In this case we obtain convergence of $\uu_r$ to the solution $\uu$ of the periodic Navier--Stokes equations,
$$
\partial_t\uu-\Delta\uu+(\uu\cdot\nabla)\uu+\nabla p=\ff,\qquad \nabla\cdot\uu=0,
$$
where the convergence is strong in $L^2(0,T;\L^2(\Omega))$ and weak in $L^2(0,T;\H^1(\Omega))$. We note that this falls short of $\L^\infty$ convergence of the velocity field; this is unsurprising since uniform convergence coupled with the fact that $\uu_r=0$ on $\partial D_r$ would imply that the limiting flow was stationary at the origin.

\section{Poisson equation}

In this section we discuss the asymptotic behaviour of weak solutions for the Poisson problem
$$
\begin{cases}
-\Delta u_r = f \ \;\text{in } \Omega_r,\\
u_r \text{ periodic}, \\
u_r = 0 \text{ on } \partial D_r.
\end{cases}
$$

Let us introduce some notation. Set $\Omega_0=(-L,L)^2=\Omega$ and $\Omega_r=(-L,L)^2\setminus D_r$, where $D_r=B(0,r)$ is the disc of radius $r$. We use the subscript `per' on a space $X$ to denote the restriction to $\Omega$ (or to $\Omega_r$) of a function that is $2L$-periodic on $\R^2$ in both directions and is in $X_{\rm loc}(\R^2)$. In this way we  define the function spaces
$H^1_\per(\Omega)$ and, for $ r > 0$,
$$
H^1_{\per}(\Omega_r)=\text{ the closure of }C_{\per}^1(\overline \Omega_r) \text{ in } H^1 (\Omega_r)
$$
and
$$
V_{0,r}=\{v\in H^1_\per(\Omega_r):\ v=0\ \mbox{on}\ \partial D_r\}.
$$
Note that any function in $V_{0,r}$ can be extended by zero inside $D_r$ to give a function in $H^1_\per(\Omega)$; this observation is fundamental to our analysis.

The vanishing obstacle problem for the Poisson equation
\begin{equation} \label{laplacebis}
-\Delta u_r = f \ \;\text{in } \Omega_r,\qquad
u_r \in V_{0,r},
\end{equation}
consists in determining the asymptotic behaviour of the solution $ u_r $ when $r$ tends to $0$.

The precise statement of our first convergence result is as follows.

\begin{thm}\label{mainlaplace}
Let $f\in L^2(\Omega)$.  For every $r>0$ there exists a unique solution $u_r\in V_{0,r}$ of the problem
  \begin{equation}\label{Lapr}
  \int_{\Omega_r}\nabla u_r\cdot\nabla v=\int_{\Omega_r}fv\qquad\mbox{for all}\ v\in V_{0,r}.
  \end{equation}
  Moreover
\begin{itemize}
\item[a)]   if $\int_\Omega f=0$ then as $r\to0$
  $$
  u_r-\frac{1}{|\Omega|}\int_\Omega u_r\to u_0\qquad\mbox{and}\qquad \nabla u_r\to\nabla u_0,
$$
where the limits are taken in $L^2(\Omega)$ and $u_0 \in H^1_\per(\Omega)$ is the unique solution of the problem
  \begin{equation}\label{Lap0}
  \int_\Omega\nabla u_0\cdot\nabla v=\int_\Omega fv\qquad\mbox{for all}\ v\in H^1_\per(\Omega)
  \end{equation}
  that satisfies $\int_\Omega u_0=0$.

  \item[b)] If $\int_\Omega f\neq0$ then $\|\nabla u_r\|_{L^2}$ is unbounded as $r\to0$. 
  \end{itemize}
\end{thm}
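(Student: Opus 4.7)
My plan is to establish existence and uniqueness by Lax--Milgram, then derive a uniform-in-$r$ energy bound for part (a) by extending $u_r$ by zero and invoking the standard Poincar\'e--Wirtinger inequality on the fixed box $\Omega$; the limit is identified by constructing admissible test functions via a logarithmic capacity cutoff, and part (b) follows by testing directly against that cutoff. For existence and uniqueness of $u_r$, every $v\in V_{0,r}$ vanishes on the entire circle $\partial D_r$, so a (rough, $r$-dependent) Poincar\'e inequality holds on $V_{0,r}$ and Lax--Milgram applies to (\ref{Lapr}).

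For part (a), the key observation is that although the Poincar\'e constant on $V_{0,r}$ degrades as $r\to 0$, each $u_r$ can be extended by zero into $D_r$ to yield $\tilde u_r\in H^1_\per$, to which the usual Poincar\'e--Wirtinger inequality on the fixed domain $\Omega$ applies. Testing (\ref{Lapr}) with $u_r$ and using $\int_\Omega f=0$ gives
$$
\int_{\Omega_r}|\nabla u_r|^2=\int_\Omega f\tilde u_r=\int_\Omega f\Bigl(\tilde u_r-\fint_\Omega\tilde u_r\Bigr)\le C\|f\|_{L^2}\|\nabla u_r\|_{L^2(\Omega_r)},
$$
so $\|\nabla u_r\|_{L^2(\Omega_r)}$ is uniformly bounded in $r$. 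Writing $w_r:=\tilde u_r-\fint_\Omega\tilde u_r$, the family $\{w_r\}$ is bounded in $H^1_\per$, so along a subsequence $w_r\rightharpoonup u_0$ weakly in $H^1_\per$ and strongly in $L^2(\Omega)$ with $\int_\Omega u_0=0$.

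To identify $u_0$ as the solution of (\ref{Lap0}) I pass to the limit against an arbitrary test function $\varphi\in H^1_\per$. The main obstacle is that $\varphi$ need not lie in $V_{0,r}$, so I construct an approximating family $\varphi_r:=\varphi\eta_r\in V_{0,r}$ using the two-dimensional logarithmic cutoff
$$
\eta_r(x)=\begin{cases}0,& |x|\le r,\\ 2\log(|x|/r)/\log(1/r),& r\le|x|\le\sqrt r,\\ 1,& |x|\ge\sqrt r,\end{cases}
$$
for which a direct computation gives $\|\nabla\eta_r\|_{L^2(\Omega)}^2=4\pi/\log(1/r)\to 0$. A short argument (first for smooth $\varphi$, then by density in $H^1_\per$) yields $\varphi_r\to\varphi$ in $H^1_\per$. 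Inserting $\varphi_r$ in (\ref{Lapr}), the weak convergence of $\nabla\tilde u_r$ combined with the strong convergence of $\nabla\varphi_r$ yields (\ref{Lap0}) for $u_0$; uniqueness of the mean-zero solution then upgrades subsequential convergence to convergence of the full family. Strong convergence of the gradients follows from convergence of norms, since
$$
\|\nabla u_r\|_{L^2(\Omega_r)}^2=\int_\Omega fw_r\to\int_\Omega fu_0=\|\nabla u_0\|_{L^2(\Omega)}^2,
$$
which, combined with the weak $L^2$ convergence of $\nabla\tilde u_r$ to $\nabla u_0$, forces strong $L^2$ convergence.

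For part (b) I argue by contradiction using the very same cutoff. If $\|\nabla u_{r_n}\|_{L^2}$ were bounded along some sequence $r_n\to 0$, then testing (\ref{Lapr}) with $\eta_{r_n}\in V_{0,r_n}$ gives
$$
\int_{\Omega_{r_n}}f\eta_{r_n}=\int_{\Omega_{r_n}}\nabla u_{r_n}\cdot\nabla\eta_{r_n}\le\|\nabla u_{r_n}\|_{L^2}\|\nabla\eta_{r_n}\|_{L^2}\to 0,
$$
whereas dominated convergence (using $|\eta_{r_n}|\le 1$ and $\eta_{r_n}\to 1$ a.e.) sends the left-hand side to $\int_\Omega f\neq 0$, a contradiction. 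The substantial ingredient throughout is the capacity-based construction of $\eta_r$, which is precisely where the two-dimensional geometry (vanishing capacity of a point) plays its decisive role.
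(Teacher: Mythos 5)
Your proposal is correct, and its skeleton --- Lax--Milgram for existence, the energy estimate obtained by extending $u_r$ by zero and applying the $r$-independent Poincar\'e--Wirtinger inequality on the fixed box, weak compactness, identification of the mean-zero limit, and the upgrade to strong gradient convergence via convergence of norms --- coincides with the paper's proof. You diverge in two genuinely different ways. First, to approximate a periodic test function $\varphi$ by elements of $V_{0,r}$ the paper uses the linear cutoff $\min(1,\epsilon^{-1}\dist(x,D_\epsilon))$, whose gradient has $L^2$ norm $\sqrt{3\pi}$ --- bounded but not small --- so it obtains only \emph{weak} $H^1$ convergence of the approximants (Lemma \ref{approx}), which suffices because the limiting identity is linear in the test function; your logarithmic cutoff has $\|\nabla\eta_r\|_{L^2(\Omega)}^2=4\pi/\log(1/r)\to0$ and hence gives \emph{strong} $H^1$ convergence, at least for bounded $\varphi$. (For general $\varphi\in H^1_\per$ the claim $\|\varphi\nabla\eta_r\|_{L^2}\to0$ is not immediate --- it needs a two-dimensional Hardy-type inequality --- so you should, as you indicate, first establish \eqref{Lap0} for smooth test functions and conclude by density; with that caveat the step is sound.) The strong convergence is not needed for this linear problem, but it is a strictly stronger tool. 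Second, your proof of part (b) is direct and quantitative: testing \eqref{Lapr} with $\eta_{r_n}\in V_{0,r_n}$ shows that any bound on $\|\nabla u_{r_n}\|_{L^2}$ forces $\int_\Omega f=0$, and in fact yields the rate
$$
\|\nabla u_r\|_{L^2(\Omega_r)}\;\ge\;\frac{\bigl|\int_{\Omega} f\,\eta_r\bigr|}{2}\sqrt{\frac{\log(1/r)}{\pi}}\;\longrightarrow\;\infty .
$$
The paper instead argues indirectly: a uniform bound would let the part (a) limit argument run and produce a solution of the limiting problem, contradicting the nonexistence of such a solution when $\int_\Omega f\neq0$. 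Your version is more self-contained and gives a lower blow-up rate; the paper's is shorter given that the part (a) machinery is already in place.
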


A few comments are in order.

Note that one can use $v=1$ as a test function in \eqref{Lap0}, from which it follows immediately that there can be no solution of the limiting problem unless
  $$
  \int_\Omega f=0.
  $$

Observe that we do not have convergence of $u_r $ itself in $L^2(\Omega)$. The main reason for this is that the constant in the Poincar\'e inequality for the punctured domain $ \Omega_r$ degrades as $ r \to 0$. We first recall the classical Poincar\'e inequality: there exists a constant $C>0$ such that for any $v\in H^1_\per(\Omega)$
\begin{equation}\label{perpoinc}
\left\|v-\fint v\right\|_{L^2(\Omega)}\le C\|\nabla v\|_{L^2(\Omega)},
\end{equation}
where
$$
\fint v=\frac{1}{|\Omega|}\int_\Omega v.
$$
Notice that  inequality \eqref{perpoinc} is still valid for functions in $v\in V_{0,r}$, and in particular the constant does not depend on $r$. However, without subtraction of the average we have only the following estimate.

\begin{lem}\label{badpoinc}
Let $r<(2-\sqrt2)L$. Then for all $v\in V_{0,r}$
  $$
  \|v\|_{L^2(\Omega_r)}\le c(-\log r)\|\nabla v\|_{L^2(\Omega_r)}.
  $$
\end{lem}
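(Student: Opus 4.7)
The plan is to use polar coordinates centred at the origin and to exploit the Dirichlet condition on $\partial D_r$ via radial integration. The key geometric observation is that $\Omega=(-L,L)^2$ is star-shaped with respect to the origin, so for every $\xx\in\Omega_r$ the radial segment from $r\xx/|\xx|$ to $\xx$ lies entirely in $\overline{\Omega_r}$; since $v$ vanishes at its inner endpoint, $v(\xx)$ can be recovered from the radial derivative along this segment, and the natural Cauchy--Schwarz weight $1/s$ will produce the logarithm.

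By density it suffices to work with $v\in C^1_\per(\overline{\Omega_r})$ vanishing on $\partial D_r$. Writing $\xx=\rho(\cos\theta,\sin\theta)$ and letting $\rho_{\max}(\theta)\le L\sqrt{2}$ denote the distance from the origin to $\partial\Omega$ in direction $\theta$, the fundamental theorem of calculus combined with Cauchy--Schwarz (splitting $1=s^{-1/2}\cdot s^{1/2}$) yields the pointwise bound
$$
|v(\xx)|^2\le \log(\rho/r)\int_r^\rho s\,|\partial_s v(s\cos\theta,s\sin\theta)|^2\,ds\le \log(L\sqrt{2}/r)\int_r^{\rho_{\max}(\theta)}s\,|\partial_s v|^2\,ds.
$$
I would then integrate this bound over $\Omega_r$ in polar coordinates; the inner factor $\int_r^{\rho_{\max}(\theta)}s\,|\partial_s v|^2\,ds$ does not depend on $\rho$, so it pulls out, leaving $\int_r^{\rho_{\max}(\theta)}\rho\,d\rho\le L^2$. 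Using the identification $\int_{\Omega_r}|\partial_\rho v|^2\,d\xx=\int_0^{2\pi}\int_r^{\rho_{\max}(\theta)}s\,|\partial_s v|^2\,ds\,d\theta$ and the elementary pointwise bound $|\partial_\rho v|\le|\nabla v|$, one concludes
$$
\|v\|_{L^2(\Omega_r)}^2\le L^2\log(L\sqrt{2}/r)\,\|\nabla v\|_{L^2(\Omega_r)}^2.
$$
Taking square roots gives the claimed estimate; the factor $\sqrt{\log(L\sqrt{2}/r)}$ can be absorbed into $c(-\log r)$ precisely because the hypothesis $r<(2-\sqrt{2})L$ (equivalently $L\sqrt{2}/r>\sqrt{2}+1$) keeps the logarithm bounded away from zero and of order $|\log r|$, so $c$ depends only on $L$.

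I do not expect any genuinely hard step here: the argument is elementary once one recognises that star-shapedness of $\Omega$ with respect to the centre of $D_r$ is the geometric feature that makes radial integration work. The only care needed is in handling the fact that $\partial\Omega$ is not a circle, so $\rho_{\max}(\theta)$ varies between $L$ and $L\sqrt{2}$ --- but the uniform bound $\rho_{\max}(\theta)^2/2\le L^2$ takes care of this. Notice that periodicity of $v$ is not used in the argument at all; only the Dirichlet trace on $\partial D_r$ matters.
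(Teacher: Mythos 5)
Your argument is correct, and its analytic core is identical to the paper's: recover $v(\xx)$ by the fundamental theorem of calculus along the ray from $\partial D_r$, apply Cauchy--Schwarz with the split $1=s^{-1/2}\cdot s^{1/2}$ to generate the logarithm, and then integrate in polar coordinates. Where you differ is the geometric setup, and your version is the cleaner of the two. The paper extends $v$ periodically and integrates the radial bound over the circumscribed disc $B(0,\sqrt2 L)\supset\Omega_r$; this is precisely where the hypothesis $r<(2-\sqrt2)L$ is used (it ensures $B(0,\sqrt2 L)$ misses the periodically repeated holes, so the extension is controlled along every ray out to radius $\sqrt2 L$), and it costs a factor of $2$ via $\int_{B(0,\sqrt2 L)}|\nabla v|^2\le 2\int_{\Omega_r}|\nabla v|^2$. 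You instead stay inside $\Omega_r$ by exploiting star-shapedness of the square with respect to the origin and stopping the radial integration at $\rho_{\max}(\theta)\in[L,\sqrt2 L]$. This makes the periodic extension, the factor of $2$, and indeed the hypothesis $r<(2-\sqrt2)L$ unnecessary for the estimate itself (any $r<L$ suffices), and it justifies your correct observation that periodicity of $v$ plays no role --- only the Dirichlet trace on $\partial D_r$ does. Both proofs share the same cosmetic imprecision at the very end: what is actually established is $\|v\|_{L^2}\le c\,(\log(\sqrt2 L/r))^{1/2}\|\nabla v\|_{L^2}$, which matches the advertised form $c(-\log r)$ only in the intended regime of small $r$ (where $-\log r>0$ and $\log(\sqrt2 L/r)\sim -\log r$); since the paper's own proof concludes in exactly the same way, this is not a defect of your argument.
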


\begin{proof} We assume that $v\in C^1_{\per}(\overline\Omega_r)$ with $v=0$ on $\partial D_r$, with the result for $v\in V_{0,r}$ obtained by a density argument. We extend $v$ periodically outside $\Omega_r$, the assumption that $r<(2-\sqrt 2)L$ meaning that any $x$ with $|x|\le\sqrt2L$ in the extended domain does not lie within one of the additional `holes', see Figure 1.

\begin{figure}[hbt]
\centering
 \includegraphics[width=0.5\textwidth]{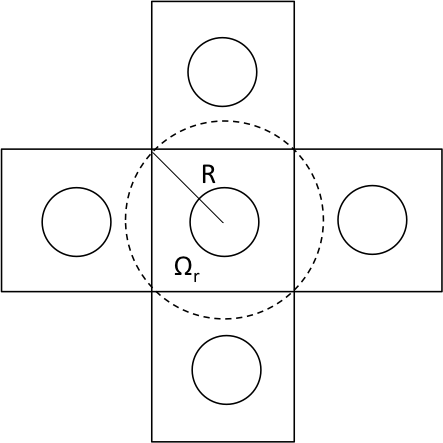}
 \caption{Periodic extension of the domain $\Omega_r$ used in the proof of Lemma 2.2}
\end{figure}

At $x=\rho\hat x$ (where $\hat x=x/|x|$), we can write
  \begin{align*}
  v(x)&=v(\rho\hat x)-v(r\hat x)=\int_r^\rho \frac{\d}{\d s}v(s\hat x)\,\d s\\
  &\le\int_r^\rho|\nabla v(s\hat x)|\,\d s.
  \end{align*}
  Then, since $B(0,\sqrt2 L)\supset\Omega_r$, setting $R=\sqrt2 L$ we have
  \begin{align*}
  \int_{\Omega_r}|v(x)|^2&\le\int_0^{2\pi}\int_r^R \rho|v(\rho\hat x)|^2\,\d \rho\,\d\theta\\
  &\le\int_0^{2\pi}\int_r^R \rho\left(\int_r^\rho|\nabla v(s\hat x)|\,\d s\right)^2\,\d\rho\,\d\theta\\
  &\le\int_0^{2\pi}\int_r^R \rho\left(\int_r^\rho s^{-1}\,\d s\right)\left(\int_r^\rho s|\nabla v(s\hat x)|^2\,\d s\right)\,\d\rho\,\d\theta\\
  &\le\int_0^{2\pi}\int_r^R \rho\log(\rho/r)\left(\int_r^\rho s|\nabla v(s\hat x)|^2\,\d s\right)\,\d\rho\,\d\theta\\
  &\le\left(\int_r^R\rho\log(\rho/r)\,\d\rho\right)\left(\int_{B(0,R)}|\nabla v|^2\,\d x\right),\\
  &\le c(-\log r)\|\nabla v\|_{L^2(\Omega_r)}^2,
  \end{align*}
  using the fact that $\int_{B(0,R)}|\nabla v|^2\le 2\int_{\Omega_r}|\nabla v|^2$ since we have extended $v$ periodically outside $\Omega_r$.
\end{proof}

We note that the fact that the constant in Lemma \ref{badpoinc} is not independent of $r$ is not merely an artefact of our method of proof: while it may be possible to improve the dependence on $r$, one cannot remove it. Indeed, consider the family of functions $u_r$ defined on $\Omega_r$ by
$$
u_r(x)=\log(1+\log(\rho/r))
$$
where $\rho$ is distance of $x$ from the origin. This defines a function in $V_{0,r}$, since its values on the boundary of $\Omega$ agree on opposite faces.

Now, certainly
\begin{align*}
\|u_r\|_{L^2(\Omega_r)}^2\ge\int_{r\le|x|\le L}|u_r(x)|^2\,\d x&=\int_r^L \rho(\log(1+\log(\rho/r)))^2\,\d\rho\\
&=r^2\int_1^{L/r}s(\log(1+\log s))^2\,\d s\\
&\ge r^2\int_{L/2r}^{L/r}s(\log(1+\log s))^2\,\d s\\
&\ge r^2(L/2r)^2\log(1+\log(L/2r))^2\\
&= \frac{L^2}{4}\log(1+\log(L/2r))^2,
\end{align*}
which is unbounded as $r\to0$. However,
$$
\partial_\rho u_r=\frac{1}{1+\log(\rho/r)}\frac{1}{\rho}
$$
and so
\begin{align*}
\|\nabla u_r\|_{L^2(\Omega_r)}^{2}\le\int_{r\le|x|\le\sqrt2 L}|\partial_\rho u_r|^2\,\d x
&=\int_r^{\sqrt2 L}\frac{1}{(1+\log(\rho/r))^2}\frac{1}{\rho}\,\d\rho\\
&\le\int_1^{\infty}\frac{1}{s(1+\log s)^2}\,\d s<\infty.
\end{align*}

We now state  a preliminary lemma on approximation of functions in $H^1_\per(\Omega)$ by functions in $V_{0,r}$ which will be used to pass to the limit.

\begin{lem}\label{approx}
  Given $v\in H^1_\per(\Omega) $ there exists a sequence $v_\vp\in V_{0,\vp}$ such that
  $$
  v_\vp\to v\quad\mbox{in}\quad H^1 (\Omega)\qquad\mbox{as}\qquad\vp\to0.
  $$
\end{lem}

\begin{proof}
(The proof consists essentially of showing that $\{0\}$
has zero $2$-capacity in $\R^2$, see Heinonen, Kilpel\"ainen, \& Martio \cite{HKM}.)

Without loss of generality, we can assume that $ 0 < \vp < 1$.
Let
  $$
  \phi_\vp(x)=\min(1,(-\ln\vp)^\nu-(-\ln|x|)^\nu)\,,\;x\in \Omega_\vp,
	$$
for $\nu\in (0,1/2)$, and $\phi_\vp$ is extended by $0$ in $D_\vp$ and by $1$ outside of $D_1$. It is clear
that $\phi_\vp(x)=1$ where
	$$
	(-\ln|x|)^\nu\le (-\ln\vp)^\nu-1\Leftrightarrow |x|\ge \exp\left(-((-\ln\vp)^\nu-1)^{1/\nu}\right)=:r(\vp).
	$$
Notice that $r(\vp)\to 0$ as $\vp\to 0$.
Thus, using polar coordinates, we have
	\begin{align}
	\int_\Omega {}&|\nabla\phi_\vp|^2=2\pi\int_{\vp}^{r(\vp)}\left(\nu(-\ln \rho)^{\nu-1}\times
\frac{-1}{\rho}\right)^2 \rho \d \rho\nonumber\\
={}&2\pi\int_{\vp}^{r(\vp)}\nu^2 (-\ln \rho)^{2\nu-2} \frac{\d \rho}{\rho}
=-\frac{2\pi\nu^2}{2\nu-1}(-\ln \rho)^{2\nu-1}\Big|^{r(\vp)}_\vp\to 0
\label{grad.phieps}
	\end{align}
when $\vp\to 0$. Moreover $\phi_\vp\to 1$ a.e.\ on $\Omega$ while remaining
bounded by $1$. Assume that $v \in H^1_\per(\Omega) \cap L^\infty(\Omega)$.
Then by dominated convergence $\phi_\vp v\to v$ in $L^2(\Omega)$ as $\vp\to 0$.
Moreover, $\nabla(\phi_\vp v)=(\nabla\phi_\vp)v+\phi_\vp\nabla v$
so that, using $v\in L^\infty(\Omega)$ and \eqref{grad.phieps} for the first term and
the dominated convergence for the second term, $\nabla(\phi_\vp v)\to \nabla v$
in $L^2(\Omega)$. Hence,
\[
\phi_\vp v\to v\mbox{ in $H^1_\per(\Omega)$ as $\vp\to 0$}.
\]
  Finally, given $ v \in H^1_\per(\Omega) $, note  that $v_n=\max(-n,\min(v,n))  \in H^1_\per(\Omega) \cap L^\infty(\Omega) $  converges to $ v$ in $H^1(\Omega)$ as $n\to\infty$. This  allows us to deduce the existence of the required sequence using a diagonal argument.
\end{proof}

We remark that we have shown that $\cup_{\vp>0} V_{0,\vp}$ is dense in $H^1_\per(\Omega)$ in the
strong topology.
We are now in a position to prove our first convergence result.

\begin{proof}[Proof (Theorem \ref{mainlaplace})]

For fixed $ r > 0 $, the existence and uniqueness of $ u_r $ follow from the Lax--Milgram Lemma and Lemma \ref{badpoinc}.

We consider the cases when $\int_\Omega f=0$ and $\int_\Omega f\neq0$ separately.

\textbf{a) Assume that $\int_\Omega f=0$.} We first obtain an estimate for the solution $ u_r $. By taking $ v = u _r $ in \eqref{Lapr} and using the Poincar\'e inequality \eqref{perpoinc} one has
  \begin{align*}
  \|\nabla u_r\|^2_{L^2}=\int_{\Omega}|\nabla u_r|^2&=\int_\Omega fu_r\\
  &=\int_\Omega f\left(u_r-\fint u_r\right)\\
  &\le\|f\|_{L^2}\left\|u_r-\fint u_r\right\|_{L^2}\le C\|f\|_{L^2}\|\nabla u_r\|_{L^2},
  \end{align*}
  from which it follows that
  \begin{equation}\label{uniform}
  \|\nabla u_r\|_{L^2}\le C\|f\|_{L^2},
  \end{equation}
    with a constant $C>0$   independent on $ r$.

Next, define
$$
\tilde u_r=u_r-\fint u_r.
$$
Then from the bound \eqref{uniform} and the Poincar\'e inequality \eqref{perpoinc}, $\|\tilde u_r\|_{H^1(\Omega_r)}$ is uniformly bounded.

It follows that, up to the extraction of a subsequence,  $\nabla u_r=\nabla\tilde u_r\cgw \nabla u_0$ and $\tilde u_r\to u_0$ in $L^2(\Omega)$. Note that
\begin{equation}\label{u00av}
\int_\Omega u_0=\lim_{r\to0}\int_\Omega\tilde u_r=\lim_{r\to0}\int_\Omega \left(u_r-\fint u_r\right)=0.
\end{equation}

Now, we pass to the limit in the weak formulation \eqref{Lapr}.  Fix $r_0>0$ and observe that for $r<r_0$ one has $V_{0,r_0}\subset V_{0,r}$. Thus,
  $$
  \int_\Omega\nabla u_r\cdot\nabla v=\int_\Omega fv\qquad\mbox{for all}\ v\in V_{0,r_0}.
  $$
The weak convergence of $\nabla u_r$ to $\nabla u_0$ in $L^2(\Omega)$ allows us to pass to the limit and  obtain
 \begin{equation}\label{limit1.r0}
  \int_\Omega\nabla u_0\cdot\nabla v=\int_\Omega fv\qquad\mbox{for all}\ v\in V_{0,r_0}\,,\mbox{ for all $r_0>0$}.
  \end{equation}
From Lemma \ref{approx}, given $ v \in H^1_\per(\Omega) $ there exists a sequence of test functions $v_\vp \in V_{0,\vp}$  such that
$ v_\vp \to v $ in $ H^1(\Omega).$ Thus, by \eqref{limit1.r0},
  $$
  \int_\Omega\nabla u_0\cdot\nabla v_\vp=\int_\Omega fv_\vp .
  $$
 Passing to the limit as $ \vp \to 0$,  it follows that
  $$
  \int_\Omega\nabla u_0\cdot\nabla v=\int_\Omega fv\qquad\mbox{for all}\ v\in H^1_\per(\Omega),
  $$
  as claimed.

  Since the limiting problem has a unique solution when one imposes the zero average condition, it follows that all convergent subsequences must have the same limit. As a consequence, the original sequence converges without the need to extract a subsequence.

  It remains to show that in fact $\nabla u_r\to\nabla u_0$ in $L^2(\Omega)$ as $ r \to 0$. To this end we show that $\|\nabla u_r\|^2_{L^2}\to\|\nabla u_0\|^2_{L^2}$. Since $u_r-\fint u_r\to u_0$ in $L^2(\Omega)$,
    $$
    \int_{\Omega_r}|\nabla u_r|^2=\int_{\Omega_r} fu_r=\int_\Omega fu_r= \int_\Omega f\left(u_r-\fint u_r\right)\to\int_\Omega fu_0.
    $$
 However, from \eqref{Lap0} we have
    $$
    \int_\Omega|\nabla u_0|^2=\int_\Omega fu_0,
    $$
    which implies that
    $$
    \int_\Omega|\nabla u_r|^2\to\int_\Omega|\nabla u_0|^2.
    $$
    Coupled with weak convergence this norm convergence implies strong convergence of $\nabla u_r$ to $\nabla u_0$ in $L^2(\Omega)$.

\textbf{b) Assume that $\int_\Omega f \neq 0$.}  We note here that if $\int_\Omega f\neq0$ and one assumes a uniform bound on $\|\nabla u_r\|_{L^2}$, then one can follow the above argument (apart from obtaining the zero average condition \eqref{u00av}) to show that there is a solution of the limiting problem. But as remarked after the statement of Theorem \ref{mainlaplace}, there can be no such solution. It follows that in this case $\|\nabla u_r\|_{L^2}$ cannot be uniformly bounded as $r\to0$.\end{proof}

We note that in fact $\|\nabla u_r\|_{L^2}$ increases as $r$ decreases. Indeed, note that if $r'<r$ then $V_{0,r}\subset V_{0,r'}$. So we can take $v=u_r$ in both formulations
  $$
  \int_{\Omega_r}\nabla u_r\cdot\nabla v=\int_{\Omega_r}f  v\quad\mbox{and}\quad\int_{\Omega_{r'}}\nabla u_{r'}\cdot\nabla v=\int_{\Omega_{r'}}f v
  $$
  to obtain
  $$
  \int_{\Omega_r}|\nabla u_r|^2=\int_{\Omega_r}f  u_r\quad\mbox{and}\quad\int_{\Omega_{r'}}\nabla u_{r'}\cdot\nabla u_r=\int_{\Omega_{r'}}f  u_r=\int_{\Omega_r}f u_r.
  $$
  Thus
  $$
  \int_{\Omega_r}|\nabla u_r|^2=\int_{\Omega_{r'}}\nabla u_{r'}\cdot\nabla u_r
  $$
  whence
  $$
  \|\nabla u_r\|_{L^2(\Omega_r)}^2\le\|\nabla u_{r'}\|_{L^2(\Omega_{r'})}\|\nabla u_r\|_{L^2(\Omega_r)},
  $$
  i.e.
  $$
  \|\nabla u_r\|_{L^2(\Omega_r)}\le\|\nabla u_{r'}\|_{L^2(\Omega_{r'})}.
  $$

  \subsection{Failure of `uniform elliptic regularity'}

The Poisson equation enjoys elliptic estimates on the second derivatives.
Here we describe an example that shows that, for a punctured domain (with a slightly
different geometry to that in \eqref{laplacebis}), such estimates
may not be uniform with respect to the size of the hole. We consider the annulus (`punctured disc')
$$
\Omega_\vp=B(0,2)\setminus B(0,\vp),
$$
    with Dirichlet conditions on the inner and outer boundary. We solve the Poisson equation in plane polar co-ordinates for radially symmetric solutions, using $'$ for $\d/\d r$:
$$
\frac{1}{r}(ru')'=f(r)\qquad u(\vp)=0,\qquad u(2)=0.
$$
We take $f=1-(3r/4)$ so that $\int_\Omega f\,\d x=\int_0^{2\pi}\int_0^2 rf(r)\,\d r\,\d\theta=0$.

Then
$$
(ru')'=r-\frac{3r^2}{4}\qquad\Rightarrow\qquad ru'(r)=\frac{r^2}{2}-\frac{r^3}{4}+C
$$
and so
$$
u'(r)=\frac{r}{2}-\frac{r^2}{4}+\frac{C}{r}.
$$
Integrating again we obtain
$$
u(r)=\frac{r^2}{4}-\frac{r^3}{12}-\frac{\vp^2}{4}+\frac{\vp^3}{12}+C\log(r/\vp),
$$
and the boundary condition at $r=2$ implies that
$$
C=\frac{1}{\log(2/\vp)}\left[-\frac{1}{3}+\frac{\vp^2}{4}-\frac{\vp^3}{12}\right].
$$

Rewrite the governing equation as
$$
u''+\frac{1}{r}u'=f.
$$
Then $\|u''\|_{L^2}$ is bounded by $\|f\|_{L^2}+\|r^{-1}u'\|_{L^2}$. So consider
\begin{align*}
\frac{u'(r)}{r}&=\frac{1}{2}-\frac{r}{4}-\frac{C}{r^2}.
\end{align*}
As the first two terms are in $L^2$, we need only consider the final term. Noting that
$$
\|r^{-1}u'\|_{L^2}^2=2\pi \int_\vp^2 r(r^{-1}u')^2\sim 2\pi C^2\int_\vp^2 \frac{1}{r^3}\sim C^2\vp^{-2},
$$
so $\|u\|_{\dot H^2}\sim \vp^{-1}(-\log\vp)^{-1}$ with log corrections.

One can find a similar example in the three-dimensional case, namely $f(r)=1-5r^2/3$ on the spherical shell between $r=\vp$ and $r=1$.

 The lack of such a bound unfortunately appears to invalidate the arguments treating a moving disc in \cite{DR} and a moving sphere in \cite{Silvestre}.

\section{The Stokes equations}

In this section we extend the results of the previous section to the Stokes problem
$$
-\Delta \uu_r + \nabla p_r=\ff\ \;\text{in } \Omega_r,\qquad \uu_r|_{\partial D_r}=0,\qquad {\rm div}\,\uu_r=0.
$$

First we introduce the required spaces of vector fields. Given any space of scalar functions $ X $ we write $ \X$ for the two-component space $X \times X.$
Define for $r\geq 0$
$$
\H^1_{\per}(\Omega_r)=\text{ the closure of }\C_{\per}^1(\overline \Omega_r) \text{ in } \H^1 (\Omega_r),
$$
$$
\H^1_{\per,\sigma}(\Omega_r)=\{\vv\in \H^1_{\per}(\Omega_r):\ \operatorname{div}\vv=0\ \mbox{in }\Omega_r\},
$$
$$
\V_{0,r}=\{\vv\in \H^1_{\per}(\Omega_r):\ \vv=0\ \mbox{on}\ \partial D_r\},
$$
and
$$
\V_{0,r,\sigma}=\{\vv\in \H^1_{\per,\sigma}(\Omega_r):\ \vv=0\ \mbox{on}\ \partial D_r\}.
$$
We observe that any function belonging to $\V_{0,r}$ or
 $\V_{0,r,\sigma} $
can be extended by zero inside of $ D_r $ to give a function in
$\H^1_{\per}(\Omega)$ or
$\H^1_{\per,\sigma}(\Omega)$, 
respectively.

We will determine the asymptotic behaviour of weak solutions to the following Stokes problem when $r \to 0$ :
$$
-\Delta \uu_r + \nabla p_r=\ff\ \;\text{in } \Omega_r,\qquad \uu_r \in \V_{0,r,\sigma}.
$$

Our second convergence result is as follows. We use a colon in the left-hand side of (\ref{stok}) to denote summation in both indices,
$$
\nabla\uu:\nabla\vv=\sum_{i,j=1}^2(\partial_iu_j)(\partial_iv_j).
$$

\begin{thm}\label{mainstokes}
Let $ \ff \in \L^2(\Omega).$  For every $r>0$ there exists a unique solution $\uu_r\in \V_{0,r,\sigma}$ of the problem
  \begin{equation}\label{stok}
  \int_{\Omega_r}\nabla \uu_r:\nabla \vv=\int_{\Omega_r}\ff\cdot\vv\qquad\mbox{for all}\ \vv\in \V_{0,r,\sigma}.
  \end{equation}
    Moreover
\begin{itemize}
  \item[a)]if $\int_\Omega \ff=0$ then as $r\to0$
  $$
  \uu_r-\frac{1}{|\Omega|}\int_\Omega \uu_r\to \uu_0\qquad\mbox{and}\qquad \nabla \uu_r\to\nabla \uu_0,
$$
where the limits are taken in $\L^2(\Omega)$ and $\uu_0 \in  \H^1_{{\rm per},\sigma}(\Omega)$ is the unique solution of the problem
  \begin{equation}\label{stok0}
 \int_\Omega\nabla \uu_0:\nabla \vv=\int_\Omega \ff\cdot\vv\qquad\mbox{for all}\ \vv\in \H^1_{{\rm per},\sigma}(\Omega)
  \end{equation}
  that satisfies $\int_\Omega \uu_0=0$;

  \item[b)]
  if $\int_\Omega \ff\neq0$ then $\|\nabla \uu_r\|_{\L^2}$ is unbounded as $r\to0$.  

  \end{itemize}
\end{thm}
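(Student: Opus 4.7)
The plan is to parallel the proof of Theorem \ref{mainlaplace}, replacing the scalar spaces $V_{0,r}$, $H^1_\per$ by their solenoidal counterparts $\V_{0,r,\sigma}$, $\H^1_{\per,\sigma}(\Omega)$. For each fixed $r>0$, existence and uniqueness of $\uu_r$ solving \eqref{stok} follow from Lax--Milgram on the Hilbert space $\V_{0,r,\sigma}$: continuity of the bilinear form $(\uu,\vv)\mapsto\int_{\Omega_r}\nabla\uu:\nabla\vv$ is clear, and coercivity follows by applying Lemma \ref{badpoinc} componentwise.

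For part (a), I would take $\vv = \uu_r$ in \eqref{stok}. Extending $\uu_r$ by zero inside $D_r$ puts it in $\H^1_{\per,\sigma}(\Omega)$, so we may rewrite the right-hand side as $\int_\Omega \ff \cdot (\uu_r - \fint\uu_r)$ (using $\int_\Omega\ff=0$) and apply the uniform Poincaré inequality \eqref{perpoinc} to obtain
\[
\|\nabla\uu_r\|_{\L^2}^2 \le \|\ff\|_{\L^2}\left\|\uu_r-\fint\uu_r\right\|_{\L^2}\le C\|\ff\|_{\L^2}\|\nabla\uu_r\|_{\L^2},
\]
so $\|\nabla\uu_r\|_{\L^2}\le C\|\ff\|_{\L^2}$ uniformly in $r$. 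Setting $\tilde\uu_r=\uu_r-\fint\uu_r$, the sequence $\{\tilde\uu_r\}$ is bounded in $\H^1(\Omega)$; along a subsequence, $\tilde\uu_r \rightharpoonup \uu_0$ in $\H^1$ and $\tilde\uu_r\to\uu_0$ in $\L^2$. Weak closedness of the divergence operator gives $\operatorname{div}\uu_0=0$, and strong $\L^2$ convergence gives $\int_\Omega\uu_0=0$.

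The next step is to identify $\uu_0$ as the solution of \eqref{stok0}. Fixing $r_0>0$, the inclusion $\V_{0,r_0,\sigma}\subset\V_{0,r,\sigma}$ for $r<r_0$ lets us test \eqref{stok} against any $\vv\in\V_{0,r_0,\sigma}$; passing to the limit gives the identity for every $\vv\in\bigcup_{r_0>0}\V_{0,r_0,\sigma}$. Promoting this to all $\vv\in\H^1_{\per,\sigma}(\Omega)$ requires a solenoidal analogue of Lemma \ref{approx}, and this is the main obstacle: the scalar construction $\phi_\epsilon v$ generally destroys $\operatorname{div}\vv=0$. My plan is to split $\vv=\bar\vv+\nabla^\perp\psi$ with $\bar\vv=\fint\vv$ a constant vector and $\psi$ a (zero-mean) periodic stream function for $\vv-\bar\vv$; for the fluctuation part I would build $\vv_\epsilon=\nabla^\perp(\phi_\epsilon\psi)$ using a $C^2$-smoothed cutoff with $|\nabla^k\phi_\epsilon|=O(\epsilon^{-k})$ for $k=1,2$, after first reducing by density and a diagonal argument to smooth $\psi$ normalized so that $\psi(0)=0$, which is exactly what keeps the dangerous terms $\psi\,\nabla^2\phi_\epsilon$ and $\nabla\psi\,\nabla\phi_\epsilon$ bounded in $\L^2(D_{2\epsilon})$; for the constant part I would use $\bar\vv\,\phi_\epsilon$ plus a Bogovskii correction supported in the annulus $D_{2\epsilon}\setminus D_\epsilon$ (whose shape is scale-invariant in $\epsilon$, so the operator norm is uniform) to restore incompressibility while keeping $\H^1$-bounds uniform. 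Once this approximation lemma is in place, one obtains \eqref{stok0}; uniqueness under the zero-average condition upgrades subsequential convergence to convergence of the whole sequence, and strong convergence $\nabla\uu_r\to\nabla\uu_0$ in $\L^2$ follows from $\|\nabla\uu_r\|_{\L^2}^2=\int_\Omega\ff\cdot\tilde\uu_r\to\int_\Omega\ff\cdot\uu_0=\|\nabla\uu_0\|_{\L^2}^2$ combined with weak convergence, exactly as in Theorem \ref{mainlaplace}.

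Part (b) then follows by contradiction: were $\|\nabla\uu_r\|_{\L^2}$ uniformly bounded, the argument of part (a) (without the zero-mean conclusion on $\uu_0$) would yield a solution $\uu_0$ of \eqref{stok0}; testing \eqref{stok0} with any constant vector $\vv\in\H^1_{\per,\sigma}(\Omega)$ then forces $\int_\Omega\ff=0$, contradicting the hypothesis.
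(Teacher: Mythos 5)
Your proposal is correct and follows the paper's overall architecture for the theorem itself: Lax--Milgram for fixed $r$, the energy estimate via the $r$-independent Poincar\'e inequality \eqref{perpoinc}, extraction of a weakly convergent subsequence of $\uu_r-\fint\uu_r$, passage to the limit first against test functions in $\V_{0,r_0,\sigma}$ and then against all of $\H^1_{\per,\sigma}(\Omega)$ via a solenoidal approximation lemma, norm convergence of $\|\nabla\uu_r\|_{\L^2}$ to upgrade weak to strong convergence, and a contradiction argument for part (b). The genuine divergence from the paper is in the proof of the approximation lemma, which the paper also identifies as the only new ingredient beyond the Poisson case. The paper multiplies $\vv$ by a cutoff and repairs the resulting divergence $\nabla\phi_\vp\cdot\vv$ by solving a divergence problem on the whole punctured domain $\Omega_\vp$, which obliges it to check (via the star-shaped covering of Figure 2) that the constant in Theorem \ref{prop} can be taken uniform in $\vp$. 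You instead write $\vv=\fint\vv+\nabla^\perp\psi$ and cut off the stream function, so the fluctuating part remains exactly divergence-free and only the constant part needs a Bogovskii correction, posed on an annulus of fixed shape where two-dimensional scale invariance of the $\L^2\to\dot\H^1$ bound gives the uniform constant for free. This is arguably cleaner and avoids the covering argument, at the price of being intrinsically two-dimensional (the paper's route works in any dimension) and of needing two precautions that you do in fact take: the cutoff must vanish identically on a neighbourhood of $D_\vp$ (as with the paper's smoothed cutoff $\psi(\rho_\vp/\vp)$, unlike the piecewise-linear one of Lemma \ref{badpoinc}), since otherwise $\psi\,\nabla^\perp\phi_\vp$ would not vanish on $\partial D_\vp$; and the reduction to smooth $\psi$ with $\psi(0)=0$ must come \emph{before} the cutoff construction, since it is exactly the bound $\|\psi\|_{\L^\infty(D_{2\vp})}=O(\vp)$ that keeps $\psi\,\nabla^2\phi_\vp$ bounded in $\L^2$. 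With those points in place your argument is sound.
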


Note that if we set $\vv=(1,0)$ and $\vv=(0,1)$ as test functions in \eqref{stok0}, then one can see immediately that for
$$\int_\Omega \ff \ne 0$$  a solution cannot exist.

  The only difference from the Poisson problem is that we now have to approximate functions in $ \H^1_\per(\Omega)$ by functions in $ \V_{0,r,\sigma}$, i.e.\ we must incorporate the divergence-free condition. If we have such approximating functions then we can use the same argument as before to show convergence of solutions to those of the limiting problem. Indeed,
the Poincar\'e inequalities work the same way as before and if $\int_\Omega \ff=0$ then
  $$\|\nabla \uu_r\|_{\L^2}\le C\|\ff\|_{\L^2}, \;\forall r>0,$$
  where $C$ is a constant independent of $r$.

  To deal with the divergence-free issue,
 we consider the following divergence problem for $g \in L^2(\Omega)$, and $\int_{\Omega}g=0$:
\begin{equation}\begin{cases} \label{divprob}
\operatorname{div}\hh=g \quad \text{in } \Omega,\\
\hh\in \H_0^{1}(\Omega).
\end{cases}
\end{equation}
When $ \Omega $ is star-like with respect to every point
of $ D_R(x_0) $ with $ \overline{D}_R(x_0) \subset \Omega$, the existence of a solution $\ff$ of this problem is proved in \cite[Lem. III.3.1]{Galdi} together with the inequality
$$\|\hh\|_{\H_0^{1}(\Omega)} \le C\|g\|_{L^2(\Omega)},$$
where the constant $ C $ depends on $\ R $ and the diameter of $ \Omega$.
Note that the divergence problem does not have a unique solution, since by adding any divergence-free function that vanishes on the boundary to the function $\hh$ one would get another solution. Nevertheless, for more general bounded domains, for instance, those satisfying the cone condition, the following result is true (cf. \cite[Thm III.3.1, Rmk. III.3.1]{Galdi}).

\begin{thm}\label{prop}
Let $\Omega$ be a bounded domain in $\R^2$ such that $\Omega=\cup_{j=1}^nU_j$, where each $U_j$ is star-shaped with respect to some open ball $B_j$ with $\overline{B_j}\subset U_j$. Then, given $g\in L^2(\Omega)$ with $\int_\Omega g=0$, there exists at least one solution $\hh$ to
\eqref{divprob} satisfying
$$
\|\hh\|_{\H^{1}_0(\Omega)}\le C^*C\|g\|_{L^2(\Omega)},
$$
where $C$ depends on $n$, the diameter of $\Omega$ and the smallest radius of the balls $B_j$. The constant $C^*$ is the maximum of
$$
C_1=1+\left(\frac{|U_1|}{|F_1|}\right)^{1/2}
$$
and
$$
C_k=\left(1+\left(\frac{|U_k|}{|F_k|}\right)^{1/2}\right)\prod_{i=1}^{k-1}\left(1+
\left(\frac{|D_i\setminus U_i|}{|F_i|}\right)^{1/2}\right),\quad k\ge 2,
$$
where $D_i=\cup_{s={i+1}}^nU_s$ and $F_i=U_i\cap D_i$.
\end{thm}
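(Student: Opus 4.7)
The plan is to reduce the covered-domain problem to the star-shaped case already established in [Galdi, Lem.~III.3.1], by decomposing $g = \sum_{i=1}^n g_i$ with each $g_i$ supported in $\overline{U_i}$ and satisfying $\int_{U_i} g_i = 0$. Granted such a decomposition with quantitative $L^q$ and $L^k$ control on each $g_i$, the cited lemma supplies $\ff_i \in \W_0^{1,q}(U_i) \cap \W_0^{1,k}(U_i)$ with $\operatorname{div}\ff_i = g_i$ and $\|\ff_i\|_{\W^{1,q}_0(U_i)} \le C_q\|g_i\|_{L^q(U_i)}$ (and analogously in $L^k$). Extending each $\ff_i$ by zero outside $U_i$ and summing gives $\ff = \sum_i \ff_i$, a solution of the full divergence problem on $\Omega$ lying in $\W_0^{1,q}(\Omega) \cap \W_0^{1,k}(\Omega)$.

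The decomposition is built recursively using the sets $D_i = \cup_{s>i} U_s$ and $F_i = U_i \cap D_i$. Writing $V_i = U_i \setminus D_i$, which partition $\Omega$, I would set
$$
g_i = g\chi_{V_i} + \phi_{i-1} - \phi_i,
$$
with $\phi_0 = \phi_n = 0$ and each $\phi_i$ a bump supported in $F_i$ of total integral $a_i$, the coefficient $a_i$ being chosen so that $\int_{U_i} g_i = 0$. A direct computation yields $a_i = a_{i-1} + \int_{V_i} g = \int_{\Omega\setminus D_i} g$; the hypothesis $\int_\Omega g = 0$ then forces $a_n = 0$, which is exactly what allows the recursion to terminate cleanly with $\phi_n = 0$. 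Under the standard chaining condition on the ordering of the $U_j$ (so that each $F_{i-1}$ lies inside $U_i$), each $g_i$ is then supported in $\overline{U_i}$ and has zero mean there.

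To recover the constants in the statement, H\"older's inequality gives $|a_i| \le |\Omega\setminus D_i|^{1-1/q}\|g\|_{L^q(\Omega)}$, while the normalized bumps obey $\|\phi_i\|_{L^q(\Omega)} \le |F_i|^{1/q-1}|a_i|$. Telescoping these estimates together with $\|g\chi_{V_i}\|_{L^q} \le \|g\|_{L^q(\Omega)}$ reproduces the product factor $\prod_{i=1}^{k-1}\bigl(1 + |F_i|^{1/q-1}|D_i \setminus U_i|^{1-1/q}\bigr)$ defining $C^*_q$; the identical argument carried out in $L^k$ yields $C^*_k$. Combining these with the Bogovskii constants $C_q$ and $C_k$ on each star-shaped piece delivers the two claimed bounds on $\ff$.

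The main obstacle is the bookkeeping rather than any single estimate: both exponents $q$ and $k$ must be carried through the same recursion so that every piece $g_i$ lies in $L^q(U_i) \cap L^k(U_i)$ simultaneously, and the corrections $\phi_i$ must be placed so that the supports of the $g_i$ come out correctly at each step. Once the recursion is set up, the fact that the Bogovskii operator on each star-shaped piece is given by the same integral kernel for all exponents means that the $L^q$ and $L^k$ estimates can be derived side by side with no further work, and the proof concludes.
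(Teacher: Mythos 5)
You should first be aware that the paper does not actually prove this statement: Theorem~3.2 is quoted verbatim from Galdi's book (Thm.~III.3.1 and Rmk.~III.3.1) and used as a black box, so there is no in-paper proof to compare against. Your sketch is, in outline, the standard argument behind that result — decompose $g=\sum_i g_i$ with each $g_i$ supported in $\overline{U_i}$ and of zero mean, apply the star-shaped (Bogovskii) lemma on each $U_i$, extend by zero and sum — so the strategy is the right one and matches the cited source rather than anything in the paper.

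There is, however, one genuine gap in the way you set up the recursion. Writing $g_i = g\chi_{V_i}+\phi_{i-1}-\phi_i$ with $\operatorname{supp}\phi_{i-1}\subset F_{i-1}=U_{i-1}\cap D_{i-1}$ only yields $\operatorname{supp} g_i\subset \overline{U_i}$ if $F_{i-1}\subset U_i$, and this ``chaining condition'' you invoke is \emph{not} among the hypotheses of the theorem: $F_{i-1}$ is contained in $D_{i-1}=\cup_{s\ge i}U_s$ but can spill across several of the later $U_s$ (this already happens for the four rotated pieces $U_0,\dots,U_3$ used in the paper's application to $\Omega_\varepsilon$). The correct bookkeeping, as in Galdi, defines a remainder $h_i$ supported in $D_i$ at each stage ($h_{i-1}=g_i+h_i$ with $\int_{U_i}g_i=0$, the correction bump placed in $F_i\subset U_i\cap D_i$), so that the previous correction is automatically redistributed among the later pieces rather than forced entirely into $g_i$. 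Relatedly, your bound $|a_i|\le|\Omega\setminus D_i|^{1-1/q}\|g\|_{L^q}$ does not by itself reproduce the stated factors $|F_i|^{1/q-1}|D_i\setminus U_i|^{1-1/q}$; those come out of the telescoping estimate for the remainders $h_i$ in the Galdi recursion. Neither point threatens the overall strategy, but as written the decomposition does not cover the generality claimed in the statement.
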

%
%

We are going to apply this theorem to the domain $ \Omega_\varepsilon$. In this case, it is not difficult to see that the constant in the inequalities can be bounded independently of $ \varepsilon$, as follows. For some $\varepsilon>0$ consider the domain $\Omega_\varepsilon$. $U_0$ denotes the part enclosed by the dashed lines in the picture, which is a part of the covering. When we perform  rotations of $\frac{\pi}{2},~ \pi, ~\frac{3\pi}{2}$ of  $U_0$ we obtain a covering of $\Omega_\varepsilon$ by $U_0$, $U_1$, $U_2$, $U_3$. As $\varepsilon$ decreases the triangle $S_0$ increases and we can put a fixed ball in $S_0$ for all smaller $\varepsilon$, such that $U_0$ is star-like with respect to this ball (we can do the same in each $U_i$). Moreover, we can easily see that the real numbers $|F_i|$ can be bounded from below. Therefore, we see that the constants in Theorem \ref{prop} can be bounded independently of $\varepsilon$, as claimed.

\begin{figure}[ht!]
\centering
 \includegraphics[width=0.5\textwidth]{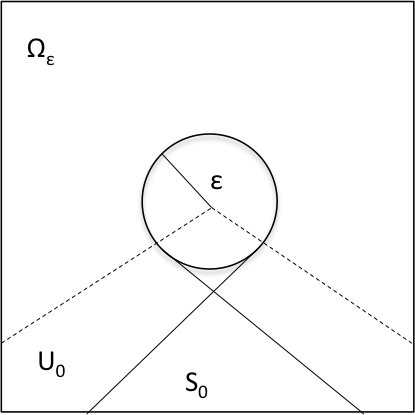}
 \caption{The constant in Theorem 3.2 can be taken to be bounded independently of $\varepsilon$.}
\end{figure}

We now prove the required lemma on the approximation of functions in $\H^1_{\per,\sigma}(\Omega)$ by functions in $\V_{0,\vp,\sigma}$.

\begin{lem}\label{approx2}
  If $\vv\in \H^1_{\per,\sigma}(\Omega)$ then there exists a sequence $\vv_\vp\in \V_{0,\vp,\sigma}$ such that
  $$
  \vv_\vp\to \vv\quad\mbox{in}\quad \H^1(\Omega)\qquad\mbox{as}\qquad\vp\to0.
  $$
\end{lem}

\begin{proof}
Let $\phi_\vp$ be the function introduced in Lemma \ref{approx}.
We first assume that $\vv\in \H^1_{\per,\sigma}(\Omega)\cap \L^\infty(\Omega)$. Then for $\vp$ small $\phi_\vp\vv\in \V_{0,\vp}$.
Since $\operatorname{div}(\vv)=0$ it follows
$$\operatorname{div}(\phi_\vp\vv)=\nabla \phi_\vp \cdot \vv.$$
Moreover,
\begin{flalign*}
\int_{\Omega_\vp}  \nabla \phi_\vp \cdot \vv= \int_{\Omega_\vp} \operatorname{div}(\vv\phi_\vp)=0.
\end{flalign*}
Noting that also that $\nabla \phi_\vp \cdot \vv $ belongs to $L^2(\Omega)$, it follows that it satisfies the conditions required by Theorem \ref{prop}, and so the divergence problem

$$\begin{cases}
\operatorname{div}\hh_\vp=-\nabla \phi_\vp \cdot \vv \quad \text{in } \Omega_\vp,\\
\hh_\vp\in \H_0^{1}(\Omega_\vp),
\end{cases}$$
has a solution $\hh_\vp$ satisfying
$$\|\hh_\vp \|_{\H_0^{1}(\Omega_\vp)} \le C \| \nabla \phi_\vp \cdot \vv \|_{L^2(\Omega_\vp)},$$
where $C$ depends only on $p$ and $\Omega$.
Setting $\vv_\vp=\hh_\vp+\phi_\vp \vv$ it is clear that $\vv_\vp\in \V_{0,\vp,\sigma}$
and, by Lemma \ref{approx}, that
\[
\vv_\vp\to \vv\mbox{ in $\H^1(\Omega)$ as $\vp\to0$.}
\]
It remains only to prove that a function in   $ \H^1_{\per,\sigma}(\Omega)$ can be approximated by functions in $ \H^1_{\per,\sigma}(\Omega)\cap \L^\infty(\Omega)$
which will allow us to conclude via a diagonal argument.

Let $\vv\in \H_{\per,\sigma}(\Omega)$ supposed to be extended by periodicity to $\R^2$.
Let $\varrho_n$ be a standard mollifier, i.e.\ $\varrho_n(x)=n^2\varrho(nx)$
where $\varrho$ is a $C^\infty$ function with support in the unit ball and such that
$$
\varrho\ge 0\,,\quad \int_{\R^2}\varrho =1.
$$
Then set
$$
\vv_n(x)=\varrho_n*\vv(x)=\int_{\R^2} \varrho_n(y)\vv(x-y)\d y.
$$
It is clear that $\vv_n$ is periodic in $x$ -- with the same period as $\vv$ -- divergence
free, smooth (and thus in $\L^\infty(\Omega)$) and, as $n\to\infty$,
$$
\vv_n,\, \nabla\vv_n \to \vv,\,\nabla \vv\mbox{ in $\L^2(\Omega)$ and  $\L^2(\Omega)^2$, respectively}.
$$
This completes the proof.
\end{proof}

To prove Theorem \ref{mainstokes} we essentially recapitulate the proof of Theorem \ref{mainlaplace} in this new setting.

\begin{proof}[ Proof (Theorem \ref{mainstokes})] Define
$$
\tilde \uu_r=\uu_r-\fint \uu_r.
$$
Then from the Poincar\'e inequality, $\|\tilde \uu_r\|_{\H^1(\Omega_r)}$ is uniformly bounded. Therefore for a subsequence $\nabla \uu_r=\nabla\tilde \uu_r\cgw \nabla \uu_0$ in $\H^1(\Omega)$ and $\tilde \uu_r\to \uu_0$ in $\L^2(\Omega)$, where $\uu_0$ satisfies $\int_\Omega \uu_0=0$.

  For a fixed $r_0$, $\forall r<r_0$ one has $\V_{0,\sigma, r_0}\subset \V_{0,\sigma,r}$. Thus
  $$
  \int_\Omega\nabla \uu_r:\nabla \vv=\int_\Omega \ff\cdot\vv\qquad\mbox{for all}\ \vv\in \V_{0,\sigma,r_0}.
  $$
  Passing to the limit in $r$ we obtain
  \begin{equation}\label{limit1}
  \int_\Omega\nabla \uu_0:\nabla \vv=\int_\Omega \ff\cdot\vv\qquad\mbox{for all}\ \vv\in \V_{0,\sigma,r_0}.
  \end{equation}

 Let $\vv\in \H^1_{\per,\sigma}(\Omega)$ and let $\vv_\vp$ be the approximating sequence from Lemma \ref{approx2}. Then for $\vp \leq r_0$ we have
  $$
  \int_\Omega\nabla \uu_0:\nabla \vv_\vp
 =\int_\Omega \ff\cdot\vv_\vp
  $$
  and passing to the limit in $\vp$ we obtain
  $$
  \int_\Omega\nabla \uu_0:\nabla \vv=\int_\Omega \ff\cdot\vv\qquad\mbox{for all}\ \vv\in \H^1_{\per,\sigma}(\Omega)
  $$
as required. (This is \eqref{stok0}.)

  Since the limiting problem has a unique solution when one imposes the zero average condition, it follows that all convergent subsequences must have the same limit. As a consequence, the whole original sequence converges toward $\uu_0$.

  To see that $\nabla \uu_r\to\nabla \uu_0$ in $\L^2(\Omega)$ we show that $\|\nabla \uu_r\|^2_{\L^2(\Omega)}\to\|\nabla \uu_0\|^2_{\L^2(\Omega)}$. Since $\uu_r-\fint \uu_r\to \uu_0$ in $\L^2(\Omega)$,
    $$
    \int_{\Omega_r}|\nabla \uu_r|^2=\int_{\Omega_r} \ff\cdot\uu_r=\int_\Omega \ff\cdot\uu_r=\int_\Omega \ff\cdot\left(\uu_r-\fint \uu_r\right)\to\int_\Omega \ff\cdot\uu_0.
    $$
    But from (\ref{stok0}) we have
    $$
    \int_\Omega|\nabla \uu_0|^2=\int_\Omega \ff\cdot\uu_0,
    $$
    which implies that
    $$
    \int_\Omega|\nabla \uu_r|^2\to\int_\Omega|\nabla \uu_0|^2.
    $$
    Coupled with weak convergence this implies strong convergence of $\nabla \uu_r$ to $\nabla \uu_0$ in $\L^2(\Omega)$.\end{proof}

\section{The time-dependent Navier--Stokes equations}

In this section we tackle the vanishing obstacle problem for the Navier--Stokes equations. The corresponding problem in a two-dimensional exterior domain (i.e.\ $\R^2\setminus D_r$)
was analysed in \cite{ILL2006} with the initial condition for the velocity corresponding to a fixed initial vorticity (independent of $r$). Here, by considering a periodic domain and suitable initial data we provide a less technical proof by using arguments along the lines of the previous sections.

We consider weak solutions to the following Navier--Stokes problem
\begin{flalign}
\begin{cases} \label{NSr}
\partial_t\uu_r -\Delta \uu_r + (\uu_r \cdot \nabla) \uu_r + \nabla p_r=\ff\ \;\text{in } \Omega_r \times (0,\infty),\\
\operatorname{div} \uu_r  = 0 \;\text{in } \Omega_r \times (0,\infty),\\
\uu_r = 0 \;\text{in } \partial D_r \times (0,\infty), \\ \text{periodic}, \\
\uu_r(0) = \uu_r^0  \;\text{in } \Omega_r,
\end{cases}
\end{flalign}
and show that they converge to periodic solutions of the equations on $\Omega$. Note that in this section we do not require that $\int_\Omega\ff=0$.

%
%
%
We introduce the spaces
$$
\H_{r,\sigma}=\text{ the closure of } \{ \vv \in \C_{\per}^1(\overline \Omega_r) :\ \vv=0\ \mbox{on}\ \partial D_r ,\ \operatorname{div}\vv=0\ \mbox{in }\Omega_r\}  \text{ in } \L^2 (\Omega_r)
$$
and
$$
\H_{\sigma}= \H_{0,\sigma} = \{ \vv \in \L_{\per}^2( \Omega) :\ \operatorname{div}\vv=0 \}.
$$

We can now prove our convergence result for time-dependent Navier--Stokes solutions.

\begin{thm}\label{mainNS}
Let $ T > 0$, $ \uu_r^0 \in \H_{r, \sigma} $ and $ f \in \L^2((0,T) \times \Omega)$.  For every $r>0$ there exists a unique weak solution $\uu_r$ of  problem \eqref{NSr}, i.e.\ a unique $\uu_r\in L^2(0,T;\V_{0,r,\sigma}) \cap L^\infty(0,T;\H_{r,\sigma})$ with $\partial_t\uu_r\in L^2(0,T;\V'_{0,r,\sigma})$, such that
  \begin{align}\label{weakNSr}
&\langle\partial_t \uu_r , \vv\rangle +  \int_{\Omega_r}\nabla \uu_r:\nabla \vv + \int_{\Omega_r} [(\uu_r\cdot\nabla) \uu_r]\cdot \vv=\int_{\Omega_r}\ff\cdot\vv\ \mbox{ for all } \vv\in \V_{0,r,\sigma}, \\
&\uu_r(0)= \uu_r^0.
  \end{align}
In addition, $\uu_r$ satisfies the energy inequality
\begin{equation} \label{energy}
\| \uu_r (t) \|_{\L^2(\Omega_r)}^2 +  \int_0^t \| \nabla \uu_r\|_{\L^2(\Omega_r)}^2 \leq C(T) ( \|\uu_r^0 \|_{\L^2(\Omega_r)}^2 + \int_0^t \|\ff\|_{\L^2(\Omega_r)}^2).
\end{equation}
Furthermore, if $\uu_r^0 \rightharpoonup  \uu^0 $ in $\L^2(\Omega) $  as $r\to0$, then
  $$
  \uu_r \to \uu \mbox{ strongly in }\L^2(0,T;\H_\sigma) \text{ and weakly in } L^2 (0,T;\H^1_{\per, \sigma}(\Omega)),
$$
where  $\uu$ is the unique weak solution of the Navier--Stokes problem
   \begin{align*}
&\langle\partial_t \uu, \vv\rangle +  \int_{\Omega}\nabla \uu:\nabla \vv + \int_{\Omega} [(\uu\cdot\nabla) \uu]\cdot \vv=\int_{\Omega}\ff\cdot\vv \mbox{ for all}\ \vv\in \H^1_{\per, \sigma}(\Omega), \\
& \uu(0)= \uu^0.
  \end{align*}
  \end{thm}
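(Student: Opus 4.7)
The plan is to first establish existence and uniqueness of $\uu_r$ for each fixed $r>0$ by classical two-dimensional Navier--Stokes arguments, and then pass to the limit $r\to 0$ using compactness combined with the divergence-free approximation Lemma \ref{approx}. For fixed $r>0$ on the Lipschitz domain $\Omega_r$, existence of a weak solution satisfying \eqref{weakNSr} and the energy inequality \eqref{energy} follows from a Galerkin procedure built on a basis of $\V_{0,r,\sigma}$, with the a priori bound obtained by testing with $\uu_r$ itself and passage to the limit via Aubin--Lions on $\Omega_r$. Uniqueness in 2D is standard, using the Ladyzhenskaya inequality $\|\ww\|_{\L^4}^4\le C\|\ww\|_{\L^2}^2\|\nabla\ww\|_{\L^2}^2$ together with Gronwall applied to the difference of two solutions; the mixed periodic/Dirichlet structure plays no role here.

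For the limit $r\to 0$, extend each $\uu_r$ by zero inside $D_r$. The hypothesis $\uu_r^0\cgw\uu^0$ in $\L^2(\Omega)$ gives a uniform bound on $\|\uu_r^0\|_{\L^2}$, so \eqref{energy} produces uniform bounds on $\uu_r$ in $L^\infty(0,T;\L^2(\Omega))\cap L^2(0,T;\H^1(\Omega))$ and, by Ladyzhenskaya's inequality, in $L^4(0,T;\L^4(\Omega))$ as well. Fix $r_0>0$; the inclusion $\V_{0,r_0,\sigma}\subset\V_{0,r,\sigma}$ for $r<r_0$ allows one to read off from \eqref{weakNSr} a uniform bound on $\partial_t\uu_r$ in $L^2(0,T;\V'_{0,r_0,\sigma})$, the nonlinearity being controlled by rewriting $\int[(\uu_r\cdot\nabla)\uu_r]\cdot\vv=-\int(\uu_r\otimes\uu_r):\nabla\vv$ and invoking the $L^4(0,T;\L^4)$ bound.

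Aubin--Lions applied on each compact $K\subset\subset\Omega\setminus\{0\}$ then extracts a subsequence with $\uu_r\to\uu$ in $L^2(0,T;\L^2(K))$ for every such $K$, and $\uu_r\cgw\uu$ in $L^2(0,T;\H^1(\Omega))$; Lemma \ref{useful}, fed by the uniform $L^4(0,T;\L^4(\Omega))$ bound, upgrades this to strong convergence in $\L^2(\Omega\times(0,T))$. Taking $\vv\in\V_{0,r_0,\sigma}$ as a test function in \eqref{weakNSr}, the linear and forcing terms pass by weak convergence; for the nonlinearity, 2D interpolation $\|\ww\|_{\L^4}\le C\|\ww\|_{\L^2}^{1/2}\|\nabla\ww\|_{\L^2}^{1/2}$ combined with the strong $L^2(0,T;\L^2)$ and uniform $L^2(0,T;\H^1)$ bounds delivers strong convergence of $\uu_r$ in $L^2(0,T;\L^4(\Omega))$, which is enough to pass to the limit in $\int(\uu_r\otimes\uu_r):\nabla\vv$. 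The resulting weak formulation against $\V_{0,r_0,\sigma}$ is then extended to all $\vv\in\H^1_{\per,\sigma}(\Omega)$ by letting $r_0\to 0$ and invoking Lemma \ref{approx}, and uniqueness of 2D Navier--Stokes on $\Omega$ forces convergence of the whole sequence.

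The main obstacle is the absence of uniform compactness at the puncture point $\{0\}$: Aubin--Lions delivers compactness only on subsets bounded away from the origin, and a priori one has no control on the behaviour of $\uu_r$ near the origin beyond the $L^4(0,T;\L^4)$ bound. Lemma \ref{useful} is engineered precisely to bridge this gap, exploiting the uniform $L^4$-in-space control to absorb the $\L^2$ contribution on shrinking neighbourhoods of the origin; without this step the nonlinear term could not be handled in the limit.
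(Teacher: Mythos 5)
Your proposal is correct and follows essentially the same route as the paper: Galerkin existence and standard 2D uniqueness for fixed $r$, uniform energy and time-derivative estimates with test functions in $\V_{0,r_0,\sigma}$, Aubin--Lions compactness away from the origin upgraded by Lemma \ref{useful} and interpolation to strong $L^2(0,T;\L^4(\Omega))$ convergence, passage to the limit in the nonlinear term, extension of the test-function class via Lemma \ref{approx}, and whole-sequence convergence from uniqueness of the limit problem. The only point left implicit is the recovery of the initial condition $\uu(0)=\uu^0$, which the paper obtains from the time-integrated weak formulation with $\xi(0)\neq 0$ together with the weak convergence $\uu_r^0\cgw\uu^0$.
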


\begin{proof}
The proof of existence of weak solutions follows  by using the Galerkin method and, since we are in dimension two, the uniqueness is also standard.
The energy inequality, which follows formally from the differential inequality
 $$
\partial_t\|\uu_r\|_{\L^2(\Omega_r)}^2+2\|\nabla\uu_r\|_{\L^2(\Omega_r)}^2\le\|\ff\|_{\L^2(\Omega)}^2+\|\uu_r\|_{\L^2(\Omega_r)}^2
$$
using the Gronwall lemma, follows rigorously from the same limiting Galerkin procedure, with an energy inequality obtained for each approximation. (See Constantin \& Foias \cite{CF}, Galdi \cite{G}, or Robinson \cite{JCRbook}, for example.)

We split the proof of convergence into three steps.  Briefly, we will obtain estimates for the solution $ \uu_r $ independent of $ r $, show that $\uu_r$ converges to a limit in various senses, and show this  is sufficient to pass to the limit in the weak formulation of the problem.

\noindent\textbf{Step 1: Estimates.}
From the energy inequality \eqref{energy} we already know that
\begin{equation} \label{estenergy}
 \uu_r \text{ is bounded in } L^\infty(0,T;\H_\sigma) \cap L^2(0,T;\H^1_{\per,\sigma}(\Omega))
 \end{equation}
 uniformly for $r>0$.  Recall that $ \uu_r  $ has been extended by zero inside $D_r$.

We need some strong convergence in order to pass to the limit in the nonlinear term. To this end, we first estimate the time derivative of
$\uu_r$ from \eqref{weakNSr}. Observe that
$$ \int_{\Omega_r} [(\uu_r\cdot\nabla) \uu_r]\cdot \vv = - \int_{\Omega_r} [(\uu_r\cdot\nabla) \vv]\cdot\uu_r, \ \text{ for all } \vv\in \V_{0,r,\sigma}. $$
Thus, for any $\vv \in \V_{0, r,\sigma}$
  \begin{align} \label{tempest}
|\langle\partial_t \uu_r , \vv\rangle | & = \Bigl| -\int_{\Omega_r}\nabla \uu_r:\nabla \vv + \int_{\Omega_r} [(\uu_r\cdot\nabla) \vv]\cdot\uu_r +\int_{\Omega_r}\ff \cdot \vv \Bigr|  \nonumber \\
& \leq C ( \|\nabla \uu_r \|_{\L^2(\Omega)} + \|\uu_r\|_{\L^2(\Omega)}\| \uu_r \|_{\H^1(\Omega)} + \|\ff\|_{\L^2(\Omega)} ) \| \vv\|_{\H^1(\Omega)}  \nonumber \\
& \leq C ( \| \uu_r \|_{\H^1(\Omega)} + \|\ff\|_{\L^2(\Omega)} ) \| \vv\|_{\H^1(\Omega)}, \text{ a.e. } t ,
\end{align}
where we have used the interpolation inequality
\[ \| \uu \|_{\L^4(\Omega)} \leq C \| \uu \|_{\L^2(\Omega)}^{\frac{1}{2}}  \| \uu \|_{\H^1(\Omega)}^{\frac{1}{2}} \]
and that $ \uu_r $ is uniformly  bounded in $L^\infty(0,T;\H_\sigma)$.

Next, we claim that
\[ \| \uu_r (\cdot +h) - \uu_r(\cdot)\|_{L^2(0,T-h;\L^2(\Omega))}^2 \leq C h .\]
Indeed,
\begin{align*}
& \| \uu_r (\cdot+h) - \uu_r(\cdot)\|_{L^2(0,T-h;\L^2(\Omega))}^2 \\
& = \int_{0}^{T-h} \langle \uu_r (t+h) - \uu_r(t), \uu_r (t+h) - \uu_r(t) \rangle  dt \\
& = \int_{0}^{T-h} \langle \int_{t}^{t+h} \partial_t\uu_r (s)ds, \uu_r (t+h) - \uu_r(t) \rangle  dt \\
& = \int_{0}^{T-h}\int_{t}^{t+h} \langle  \partial_t\uu_r (s), \uu_r (t+h) - \uu_r(t) \rangle  dsdt.
\end{align*}
Note that we have used that $ \int_{\Omega_r} \ww \cdot \vv  = \langle \ww, \vv\rangle  $ for $ \ww \in \H_{r,\sigma} $ and $ \vv \in \V_{0, r,\sigma}$.
As $ \uu_r (t+h) - \uu_r(t) \in \V_{0, r,\sigma} $  a.e. $t$, we can use estimate \eqref{tempest}. Thus, by applying  Young inequality and Fubini Theorem, we arrive at
\begin{align*}
& \| \uu_r (\cdot+h) - \uu_r(\cdot)\|_{L^2(0,T-h;\L^2(\Omega))}^2 \\
& \leq \int_{0}^{T-h} \int_{t}^{t+h}  ( \| \uu_r (s)\|_{\H^1(\Omega)} + \|\ff(s)\|_{\L^2(\Omega)} ) \| \uu_r(t+h) - \uu_r(t)\|_{\H^1(\Omega)} ds dt \\
&\leq  \int_{0}^{T-h} \int_{t}^{t+h}   \| \uu_r (s)\|_{\H^1(\Omega)}^2 + \|\ff(s)\|_{\L^2(\Omega)}^2 +  \| \uu_r(t+h) \|_{\H^1(\Omega)}^2 + \| \uu_r(t)\|_{\H^1(\Omega)}^2 ds dt \\
 &\leq  ( \|\ff\|_{L^2(0,T;\L^2(\Omega))}^2   + 3\| \uu_r\|_{L^2(0,T;\H^1(\Omega))}^2 ) h  \\
 & \leq C h
\end{align*}
where $ C$ is independent of $ r$. The claim is proved.

\noindent\textbf{Step 2: Convergence of $ \uu_r$.} Since $ \uu_r $ is bounded in $ L^2(0,T;\H^1_{\per,\sigma}(\Omega))$,
\[\| \uu_r (\cdot+h) - \uu_r(\cdot)\|_{L^2(0,T-h;\L^2(\Omega))} \to 0 \text{ as } h \to 0 \text{  uniformly in } r,  \]
and $ \H^1_{\per,\sigma}(\Omega) \subset \subset \H_\sigma$, we can apply Theorem 3 from \cite[p. 80]{simon} and conclude that
\[ \uu_r \text{ is relatively compact in } L^2(0,T;\H_\sigma).\]

Hence,  up to a subsequence, it holds
\begin{align*}
& \uu_r  \rightharpoonup \uu \text{ in } L^2(0,T;\H^1_{\per,\sigma}(\Omega)) \text{ and }\\
 & \uu_r  \rightarrow  \uu \text{ in } L^2(0,T;\H_\sigma)
\end{align*}
By interpolation and the H\"older inequality,
\begin{align*}
\int_0^T \|\uu_r - \uu \|_{\L^4(\Omega)}^2 & \leq C \int_0^T \|\uu_r - \uu \|_{\L^2(\Omega)}  \|\uu_r - \uu \|_{\H^1(\Omega)}  \\
& \leq C \Bigl(\int_0^T \|\uu_r - \uu \|_{\L^2(\Omega)}^2 \Bigr)^{\frac{1}{2}} .
\end{align*}
Thus, we infer in addition that
\begin{equation} \label{convL4}
\uu_r \rightarrow \uu \text{ in } L^2(0,T; \L^4( \Omega)).
\end{equation}


\noindent\textbf{Step 3: Passage to the limit in the weak formulation.}
By using that, for a fixed $r_0$, $\forall r<r_0$ one has $\V_{0,r_0,\sigma}\subset \V_{0,r,\sigma}$, multiplying \eqref{weakNSr} by $\xi \in C^\infty_0[0,T)$ and integrating in time,
we have
\begin{align*}
-\int_0^T \int_\Omega  \uu_r\cdot  \vv \xi'  + \int_0^T \int_{\Omega}\nabla \uu_r:\nabla \vv \xi &-\int_0^T\int_{\Omega} [(\uu_r\cdot\nabla) \vv]\cdot \uu_r \xi \\
&=\int_0^T\int_{\Omega}\ff\cdot \vv \xi
+
\int_{\Omega} \uu_r^0\cdot \vv \xi(0)
\end{align*}
for all $ \vv \in \V_{0,r_0,\sigma} $ and $ \xi \in C^\infty_0[0,T)$.

The weak convergences are sufficient to pass the the limit in the linear terms. To show the convergence of the nonlinear term, we re-write
\begin{align*}
\int_0^T\int_{\Omega} [(\uu_r\cdot\nabla) \vv]&\cdot \uu_r \xi -[(\uu\cdot\nabla) \vv]\cdot \uu \xi\\
 &= \int_0^T\int_{\Omega} [((\uu_r-\uu)\cdot\nabla) \vv]\cdot \uu_r \xi + [(\uu \cdot\nabla) \vv]\cdot (\uu_r -\uu) \xi.
\end{align*}
We prove that the first term on the right-hand side goes to zero; the convergence of the second term is proved similarly.  By using the H\"older inequality in space and then in time, we have
\begin{align*}
\Bigl|\int_0^T\int_{\Omega} [((\uu_r-\uu)&\cdot\nabla) \vv]\cdot \uu_r \xi \Bigr|\\
 & \leq \int_0^T \|\uu_r-\uu\|_{\L^4(\Omega)} \|\nabla \vv \|_{\L^2(\Omega)} \|\uu_r \|_{\L^4(\Omega)}\|\xi\|_{L^\infty(0,T)} \\
& \leq C \Bigl(\int_0^T \|\uu_r-\uu\|_{\L^4(\Omega)}^2 \Bigr)^{\frac{1}{2}}  \Bigl(\int_0^T\|\uu_r \|_{\H^1(\Omega)}^2\Bigr)^{\frac{1}{2}},
\end{align*}
where we have used the embedding $ \H^1(\Omega) \subset \L^4(\Omega).$
The convergence follows from convergence \eqref{convL4} and estimate \eqref{estenergy}.

%

  Passing to the limit in $r$ we obtain
  \begin{align*}
-\int_0^T \int_\Omega  \uu\cdot  \vv \xi'  + \int_0^T \int_{\Omega}\nabla \uu:\nabla \vv \xi& - \int_0^T\int_{\Omega} [(\uu\cdot\nabla)\vv]\cdot \uu \xi\\
 &=\int_0^T\int_{\Omega}\ff\cdot \vv \xi
+
\int_{\Omega} \uu^0\cdot \vv \xi(0)
\end{align*}
for all $ \vv \in \V_{0,r_0,\sigma} $ and $ \xi \in C^\infty_0[0,T)$.

Next, we argue as in the Stokes problem by using the approximation from Lemma \ref{approx2}. Given $ \vv \in \H^1_{\per,\sigma}(\Omega)  $ there exist $ \vv^\vp \in \V_{0,\vp, \sigma} $
such that $ \vv^\vp \rightharpoonup \vv $ in $ \H^1_{\per,\sigma}(\Omega).$ Thus, for $ \vp \leq r_0 $ one has
\begin{align*}
-\int_0^T \int_\Omega  \uu\cdot  \vv^\vp \xi'  + \int_0^T \int_{\Omega}\nabla \uu:\nabla \vv^\vp \xi &- \int_0^T\int_{\Omega} [(\uu\cdot\nabla)\vv^\vp]\cdot \uu \xi\\
& =\int_0^T\int_{\Omega}\ff\cdot \vv^\vp \xi +
\int_{\Omega} \uu^0\cdot \vv^\vp \xi(0).
\end{align*}

  Passing to the limit in $\vp$ we get
\begin{align}
-\int_0^T \int_\Omega  \uu\cdot  \vv \xi'  + \int_0^T \int_{\Omega}\nabla \uu:\nabla \vv \xi &- \int_0^T\int_{\Omega} [(\uu\cdot\nabla)\vv]\cdot \uu \xi\nonumber\\
& =\int_0^T\int_{\Omega}\ff\cdot \vv \xi
+
\int_{\Omega} \uu^0\cdot \vv \xi(0)\label{weakweak}
\end{align}
for all $ \vv \in \H^1_{\per,\sigma}(\Omega)  $ and $ \xi \in C^\infty_0[0,T)$.

%

In particular, since $ \uu \in L^2(0,T;\H^1_{\per,\sigma}(\Omega)) \cap L^\infty(0,T;\H_\sigma)$ we can take $ \xi \in C^\infty_0(0,T) $ in (\ref{weakweak}) and deduce that $ \partial_t \uu \in L^2(0,T; (\H^1_{\per,\sigma}(\Omega))')$, whence  $\uu$ satisfies
\[\langle\partial_t \uu, \vv\rangle +  \int_{\Omega}\nabla \uu:\nabla \vv + \int_{\Omega} [(\uu\cdot\nabla) \uu]\cdot \vv=\int_{\Omega}\ff\cdot\vv  \]
for all $ \vv \in \H^1_{\per,\sigma}(\Omega)  $.

It remains only to prove that $ \uu(0) = \uu^0$.  To see this, multiply the previous equality by $ \xi \in C^\infty_0[0,T)$ and integrate in time, to obtain
\begin{align*}
-\int_0^T \int_\Omega  \uu\cdot  \vv \xi'  + \int_0^T \int_{\Omega}\nabla \uu:\nabla \vv \xi &- \int_0^T\int_{\Omega} [(\uu\cdot\nabla) \vv]\cdot \uu \xi\\
 &=\int_0^T\int_{\Omega}\ff\cdot \vv \xi
+
\int_{\Omega} \uu(0)\cdot \vv \xi(0)
\end{align*}
for all $ \vv \in \H^1_{\per,\sigma}(\Omega)  $ and $ \xi \in C^\infty_0[0,T)$. Comparing with \eqref{weakweak} we conclude that $ \uu(0) = \uu^0$.
Notice also that  $ \uu \in C([0,T]; \H_\sigma) $.

  Since the limiting problem has a unique solution, it follows that all convergent subsequences must have the same limit. As a consequence, the whole original sequence converges toward $\uu$.
\end{proof}

\section{Conclusions}

We have analysed three models in a simple but unusual geometry, the `punctured periodic domain', showing that the influence of the obstacle, a disc of radius $r$, evaporates in the limit as $r\to0$.

Some interesting open problems remain. While the lack of a bound on the average of the solution $u_r$ over $\Omega$ (in both the Poisson and Stokes problems) that is uniform in $r$ appears initially to be only a mathematical curiosity, such a bound is central to tackling the stationary Navier--Stokes problem in this geometry.

The fact that there is no `uniform elliptic regularity' for the Laplacian or Stokes operator in this geometry means that the important `vanishing tracer' problem (cf.\ \cite{DR,Silvestre}) also remains open.
Very recently, Lacave \& Takahasi \cite{Lacave17} obtained a partial result in the two-dimensional case assuming that
the density of the solid is independent of $r$.  They employed some optimal $L^p-L^q$ decay estimates of the semigroup associated to the fluid-rigid body system.
We plan to return to this in a future paper.

\subsection*{Acknowledgments}

This article was written during a part time employment of MC at the S. M. Nikolskii Mathematical Institute of RUDN University, 6 Miklukho-Maklay St, Moscow, 117198.
The publication was supported by the Ministry of Education and Science of the Russian Federation (Agreement number 02.a03.21.0008). MC also thanks Monash University for
an invitation during which a part of this article was completed.
GP thanks the Mathematics Institute of the University of Warwick for
their kind hospitality during her visit there. GP was partially supported by FAPESP, grant 2013/00048-3 and CNPq, grant 306646/2015-3, Brazil. JCR was partially supported by an EPSRC Leadership Fellowship EP/G007470/1, and would like to thank MC and WM for their hospitality in Zurich.

%
%


\begin{thebibliography}{99}

\bibitem{Ariel} P. D. Ariel. On computation of the three-dimensional flow past a stretching sheet, Appl.
Math. Computat. 188 (2007), 1244-1250.

\bibitem{CF} P. Constantin and C. Foias, The Navier--Stokes equations. University of Chicago Press, 1988.

\bibitem{DR} M. Dashti and J. C. Robinson, The motion of a fluid-rigid disc system at the zero limit
of the rigid disc radius, Arch. Ration. Mech. Anal. 200 (2011), no. 1, 285-312.

\bibitem{Elde} J. D. Eldredge, Numerical simulation of the fluid dynamics of 2D rigid body motion with the vortex particle method,
J. Comput. Phys. 221 (2007), 626-648.

\bibitem{G} G.P. Galdi, An introduction to the Navier--Stokes initial-boundary value problem.  Fundamental directions in mathematical fluid mechanics,  1--70, Adv. Math. Fluid Mech., Birkh\"{a}user, Basel, 2000.


\bibitem{Galdi} G.P Galdi, An Introduction to the Mathematical Theory of the Navier--Stokes Equations, second edition. Springer, New York,  2011.

\bibitem{HKM} J. Heinonen, T. Kilpel\"ainen and O. Martio,
Nonlinear potential theory of degenerate elliptic equations.
Dover Publications, Inc., Mineola, NY, 2006. xii+404 pp. ISBN: 0-486-45050-3

\bibitem{Homen} D. Homentcovschi. Three-dimensional Oseen flow past a flat plate, Q. Appl. Math. 40
(1982), 137-149.

\bibitem{Hudson} J. D. Hudson and S. C. R. Dennis, The flow of a viscous incompressible fluid past a normal
flat plate at low and intermediate Reynolds numbers: the wake. J. Fluid Mech. 160 (1985),
369-383.

\bibitem{IK2009} D. Iftimie,  J. Kelliher. Remarks on the vanishing obstacle limit for a
3D viscous incompressible fluid, Proc. Amer. Math. Soc. 137 (2009), 685-694.

\bibitem{ILL2003} D. Iftimie, M. C. Lopes Filho and H. J. Nussenzveig Lopes, Two-dimensional incompressible
ideal flow around a small obstacle. Commun. PDEs 28 (2003), 349-379.

\bibitem{ILL2006} D. Iftimie, M. C. Lopes Filho and H. J. Nussenzveig Lopes, Two-dimensional incompressible
viscous flow around a small obstacle. Math. Annalen 336 (2006), 449-489.

\bibitem{ILL2009} D. Iftimie, M. C. Lopes Filho and H. J. Nussenzveig Lopes, Incompressible flow around a
small obstacle and the vanishing viscosity limit. Commun. Math. Phys. 287 (2009), 99-115.

\bibitem{Lacave09} C. Lacave. Two-dimensional incompressible ideal flow around a thin obstacle tending to a
curve, Annales Inst. H. Poincar´e Analyse Non Lin´eaire 26 (2009), 1121-1148.

\bibitem{Lacave2} C. Lacave, Two-dimensional incompressible viscous flow around a thin obstacle tending
to a curve, Proc. Roy. Soc. Edinburgh Sect. A 139 (2009), no. 6, 1237-1254.

\bibitem{Lacave3} C. Lacave,  3D viscous incompressible fluid around one thin obstacle, Proc. Amer. Math. Soc.  143  (2015), 2175–-2191.

\bibitem{Lacave17} C. Lacave and T. Takahashi, Small moving rigid body into a viscous incompressible fluid.
Arch. Ration. Mech. Anal. 223 (2017), no. 3, 1307-1335.

\bibitem{Ladyz} O.A. Ladyzhenskaya, The mathematical theory of viscous incompressible flow. Gordon and Breach Science Publishers, New York-London 1963.

\bibitem{Lopes} M. C. Lopes Filho, Vortex dynamics in a two-dimensional domain with holes and the small
obstacle limit. SIAM J. Math. Analysis 39 (2007), 422--436.

\bibitem{Lopes13} M. C. Lopes Filho, H. H. Nguyen and H. J. Nussenzveig Lopes, Incompressible and ideal 2D flow around a small obstacle with constant velocity at infinity. Quart. Appl. Math. 71 (2013) 679--687.


\bibitem{Lu16} Y. Lu, On uniform estimates for Laplace equation in balls with small holes,  Calc. Var. Partial Differential Equations 55 (2016), no. 5, Art. 110, 19 pp.

\bibitem{Lupreprint} Y. Lu, Uniform estimates for Stokes equations in domains with small holes and applications in homogenization problems, Preprint arXiv:1510.01678.

\bibitem{Pierce} D. Pierce, Photographic evidence of the formation and growth of vorticity behind plates accelerated from rest in still air, J. Fluid Mech., 11 (1961), 460--464.

    \bibitem{JCRbook} J.C. Robinson, Infinite-dimensional dynamical systems. Cambridge University Press, 2001.

\bibitem{Robinson} J.C. Robinson, A coupled particle-continuum model: well-posedness and the limit of
zero radius, Proc. R. Soc. Lond. Ser. A Math. Phys. Eng. Sci. 460 (2004), 1311-1334.

\bibitem{Silvestre} A. L. Silvestre and  T. Takahashi, The Motion of a Fluid-Rigid Ball System
at the Zero Limit of the Rigid Ball Radius, Arch. Rational Mech. Anal. 211 (2014) 991-1012.

\bibitem{Smith} S. H. Smith, A note on the boundary layer approach to the impulsively started flow past a
flat plate, J. Engng Math. 29 (1995), 195-202.

\bibitem{simon} J. Simon, Compact sets in the space $L^p(0,T;B)$,
Ann. Mat. Pura Appl. 146 (1987), 65-96.

\bibitem{Temam1} R. Temam, Navier--Stokes equations. Theory and numerical analysis. Studies in Mathematics and its Applications, Vol. 2. North-Holland Publishing Co., Amsterdam-New York-Oxford, 1977.

\bibitem{Temam2} R. Temam, Navier--Stokes equations and nonlinear functional analysis. CBMS-NSF Regional Conference Series in Applied Mathematics, 41. Society for Industrial and Applied Mathematics (SIAM), Philadelphia, PA, 1983.

\bibitem{Turfus} C. Turfus, Prandtl-Batchelor flow past a flat plate at normal incidence in a channel: inviscid
analysis, J. Fluid Mech. 249 (1993), 59-72.

\bibitem{Zannetti} L. Zannetti, Vortex equilibrium in flows past bluff bodies, J. Fluid Mech. 562 (2006), 151-171.

\end{thebibliography}
\end{document}